\tikzset{mynode/.style={inner sep=2pt,fill,outer sep=0,circle}}
\newcommand{\CM}{Cohen-Macaulay}
\newcommand{\m}{\mathfrak{m} }
\newcommand{\fm}{\mathfrak{m} }
\newcommand{\la}{\lambda}
\newcommand{\R}{\mathcal{R}}
\newcommand{\TT}{\mathcal{T} }
\newcommand{\Z}{\mathbb{Z} }
\newcommand{\NN}{\mathbb N}
\newcommand{\pd}{\operatorname{pd}}
\newcommand{\depth}{\operatorname{depth}}
\newcommand{\Soc}{\operatorname{Soc}}
\newcommand{\gr}{\operatorname{gr}}
\newcommand{\hgt}{\operatorname{height}}
\newcommand{\Tor}{\operatorname{Tor}}
\theoremstyle{plain}
\newtheorem{theorem}{Theorem}[section]
\newtheorem{corollary}[theorem]{Corollary}
\newtheorem{lemma}[theorem]{Lemma}
\newtheorem{proposition}[theorem]{Proposition}
\theoremstyle{definition}
\newtheorem{remark}[theorem]{Remark}
\newtheorem{example}[theorem]{Example}
\newtheorem*{question*}{\it Question}
\newtheorem*{example*}{Example}
\newtheorem*{claim*}{\it Claim}
\newtheorem*{note*}{\it Note}
\def\be{\begin{equation}}
\def\ee{\end{equation}}
\def\ben{\begin{eqnarray}}
\def\een{\end{eqnarray}}
\title[]{Associated graded rings
of the   filtration of    \\ [2mm]  tight closure of powers of parameter ideals }
\author{Saipriya Dubey}
\address{Department of Mathematics, Indian Institute of Technology Bombay
Powai, Mumbai, India - 400076}
\email{sdubey@math.iitb.ac.in}
\author{Jugal Verma}
\email {jkv@iitb.ac.in}
\begin{document}

\maketitle
\begin{abstract}
Let $I$ be an ideal generated by a system of parameters in an excellent Cohen-Macaulay local domain. We show  that the associated graded ring $G^*(I)$ of the filtration $\{(I^n)^*: n\in \NN\}$ is  Cohen-Macaulay. We prove that if $R$ is an excellent  Buchsbaum local domain then  $G^*(I)$ is a Buchsbaum module over the Rees ring $\mathcal R^*(I)=\oplus_{n\in \NN}(I^n)^*.$ We provide quick proofs of well-known results of I. Aberbach, Huneke-Itoh and Huneke-Hochster about the filtration  $\{(I^n)^*: n\in \NN\}$ in excellent local domains. An important  tool  used in the proofs is a deep result due to M. Hochster and C. Huneke which states that the   absolute integral closure of an excellent local domain is a big Cohen-Macaulay algebra \cite{HocHun1992}. We compute the tight closure of $I^n$ where $I$ is generated by homogeneous system of parameters having the same degree $e$  in the  hypersurface ring  $R=\mathbb{F}_p[X_0,\ldots ,X_d]/(X_0^r+\cdots+X_d^r).$ In such cases we prove that $G^*(I)$ is Cohen-Macaulay. We provide conditions on $r, d, e$ for the Rees algebra $\mathcal R^*(I)$ to be Cohen-Macaulay.
\end{abstract}
\thispagestyle{empty}
\section{\bf Introduction}
Throughout this article, $R$ denotes a commutative Noetherian local ring with maximal ideal $\m.$  Let $J^*$ denote the tight closure of an ideal $J.$ In this paper, we study some properties of the tight closure of powers of an ideal.  
The depth of  the associated graded ring 
\[G^*(I)=\bigoplus_{n\geq 0} (I^n)^*/(I^{n+1})^*\] of the filtration $\TT=\{(I^n)^*\}$ where $I$ is an $\m$-primary ideal is an important invariant for understanding the tight Hilbert polynomial   $P^*_I(x)$ for the tight Hilbert function $H^*_I(n)=\ell(R/(I^n)^*).$ By a theorem of D. Rees, 
\cite[Theorem 1.4]{reesau}, if  $R$ is analytically unramified Noetherian 
local ring, then the Rees algebra $\R^*(I)=\bigoplus_{n\geq 0}(I^n)^*t^n$ is a finite module over the Rees algebra $\R(I)=\bigoplus_{n\geq 0}I^nt^n.$  This means that $G^*(I)$ is a finitely generated graded module over the associated graded ring $G(I)=\bigoplus_{n\geq 0}I^n/I^{n+1}.$
Therefore the  \textbf{tight Hilbert function} defined by $H_I^*(n)=\lambda(R/(I^n)^*)$ coincides with a  polynomial, $P_I^*(n)$, of degree $d=\dim R$ for large $n$ and is known as the \textbf{tight Hilbert polynomial of $I$}. We write
$$P_I^*(n)=e_0^*(I)\binom{n+d-1}{d}-e_1^*(I)\binom{n+d-2}{d-1}+\cdots+(-1)^de_d^*(I)$$ where $e_i^*(I)\in \mathbb{Z}.$
Here $e^*_0(I)=e(I),$ the multiplicity of $I$,  $e^*_1(I)$ is known as the \textbf{tight Chern number of $I$} and  $e^*_i(I)$ for $i=0,1,\ldots,d$ are called the \textbf{tight Hilbert coefficients of $I.$}

The vanishing of the Chern number of $I$ gives information about the local ring $R.$  Recall that a Noetherian local ring of prime characteristic is called an $F$-rational ring if all ideals generated by parameters are tightly closed. It is proved  in \cite{GMV}, that in  a \CM~  analytically unramified  local ring of  prime characteristic,  the tight Chern number of some ideal generated by a system of parameters is zero if and only if $R$ is $F$-rational. 

Let $I$ be an ideal generated by a system of parameters and $R$ be analytically unramified Cohen-Macaulay  local ring.
If the depth of $G^*(I)\geq d-1$ where $d=\dim R,$ then by a result of 
S. Huckaba and T. Marley, \cite{HucMar} the tight Hilbert coefficients of $I$ for $j=1, 2, \ldots, d$ are given by
\[e_j^*(I)=\sum_{n\geq j}\binom{n-1}{j-1}\ell((I^n)^*/I(I^{n-1})^*).\]
Therefore, it is useful to know the depth of $G^*(I).$ 
If $x_1, x_2, \ldots, x_h $ are elements of a commutative Noetherian ring and
$\text{ht}(x_1, x_2, \ldots, x_h)=h$ then we say that $x_1, \ldots, x_h$ are {\bf parameters}  and the ideal $(x_1, x_2, \ldots, x_h)$ is called a {\bf parameter ideal.}
We prove the following result about depth of $G^*(I)$ in  Section \ref{sec: G*(I)} .
\begin{theorem}
Let $(R,\m)$ be an excellent Cohen-Macaulay local domain. Let $I$ be a parameter ideal.  Then $\depth G^*(I)\geq \text{ht } I.$ In particular if $I$ is generated by a system of parameters, then  $G^*(I)$ is  Cohen-Macaulay.
\end{theorem}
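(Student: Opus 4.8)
The plan is to exhibit an explicit homogeneous regular sequence of length $h=\hgt I$ on $G^*(I)$. Write $I=(x_1,\dots,x_h)$; since $R$ is Cohen-Macaulay and $\hgt I=h$, the $x_i$ form a regular sequence in $R$. Set $S=R^+$. By the Hochster-Huneke theorem $S$ is a big Cohen-Macaulay $R$-algebra, so $x_1,\dots,x_h$ is a regular sequence on $S$ as well, and (as established earlier) tight closure is computed by contraction, $(I^n)^*=I^nS\cap R$ for all $n$. Because $R$ is analytically unramified, Rees's theorem makes $\R^*(I)$ module-finite over $\R(I)$, so the filtration $\TT=\{(I^n)^*\}$ is Noetherian and the Valabrega-Valla criterion for filtrations applies: the degree-one initial forms $x_1^*,\dots,x_h^*\in I^*/(I^2)^*$ form a regular sequence on $G^*(I)$ if and only if
\[ I\cap (I^n)^* = I\,(I^{n-1})^* \qquad (n\ge 1). \]
Granting these, $x_1^*,\dots,x_h^*$ is a regular sequence lying in the homogeneous maximal ideal, so $\depth G^*(I)\ge h=\hgt I$. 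When $I$ is a full system of parameters, $h=d=\dim R=\dim G^*(I)$ (the last equality because the tight Hilbert polynomial has degree $d$), and the depth bound forces $G^*(I)$ to be Cohen-Macaulay.

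The inclusion $I(I^{n-1})^*\subseteq I\cap(I^n)^*$ is immediate from $(I^{n-1})^*=I^{n-1}S\cap R$. The reverse inclusion is the crux; it is the tight-closure analogue of the Huneke-Itoh intersection theorem for the integral-closure filtration, with the big Cohen-Macaulay algebra $S$ playing the role that the module-finite normalization plays there. I would prove it by induction on the number $h$ of generators. The base case $h=1$ is immediate: if $a\in (x)\cap x^nS$, write $a=xa_1$ with $a_1\in R$; since $S$ is a domain and $x\ne 0$, the relation $xa_1\in x^nS$ forces $a_1\in x^{n-1}S\cap R=(I^{n-1})^*$, whence $a\in x(I^{n-1})^*=I(I^{n-1})^*$. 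For the inductive step I would reduce modulo $x_1$: as $x_1$ is a nonzerodivisor on $S$, the quotient $S/x_1S$ is big Cohen-Macaulay over $R/(x_1)$ and $\bar x_2,\dots,\bar x_h$ is a regular sequence on it, so the induction hypothesis applies to the contracted data $R/(x_1)\hookrightarrow S/x_1S$. Lifting the resulting expression for $\bar a$ back to $S$ produces a representation $a=\sum_{j\ge 2}x_j d_j+x_1w$ with $d_j\in R$ and $w\in S$.

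The main obstacle is precisely the bookkeeping in this lifting. The coefficients produced from $S/x_1S$ a priori satisfy only $d_j\in R\cap(I^{n-1}S+x_1S)$ and $w\in S$, rather than lying in the contraction $(I^{n-1})^*=I^{n-1}S\cap R$ that the statement demands; it is here, and not in any formal rearrangement, that the Cohen-Macaulay property of $S$ must genuinely be used. The decisive input is colon-capturing for the regular sequence, $I^nS:_S x_1=I^{n-1}S$, which holds because $\gr_{IS}(S)\cong (S/IS)[T_1,\dots,T_h]$ is a polynomial ring in which each $T_i$ is a nonzerodivisor. Combined with $a\in I^nS$, this identity constrains $w$ through $x_1w=a-\sum_{j\ge2}x_jd_j$, and a symmetric argument constrains the $d_j$, so that the coefficients can be replaced by honest elements of $I^{n-1}S$; since $a$, the $x_j$, and the replaced coefficients all lie in $R$, the corrected coefficients land in $I^{n-1}S\cap R=(I^{n-1})^*$, yielding $a\in I(I^{n-1})^*$.

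Arranging the replacement so that every coefficient simultaneously lands in $R$ and in $I^{n-1}S$ is the delicate point I expect to require the most care, since the natural corrections coming from the Koszul syzygies of $x_1,\dots,x_h$ have entries in $S$; controlling these against the contraction to $R$ is the entire content of the argument. This is also the only place where excellence and the domain hypothesis enter, through the existence and big Cohen-Macaulay property of $R^+$. Once the intersection conditions are secured, the conclusion $\depth G^*(I)\ge \hgt I$, and Cohen-Macaulayness in the parameter case, follow formally as in the first paragraph.
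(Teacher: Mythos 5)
Your overall frame is the same as the paper's: reduce to the Valabrega--Valla conditions $I\cap (I^{n})^*=I(I^{n-1})^*$ and verify them using $(I^m)^*=I^mR^+\cap R$ together with the fact that $x_1,\dots,x_h$ is a regular sequence on $R^+$. But the route you propose for the crucial intersection identity --- induction on the number of generators $h$, passing to $R/(x_1)$ and $S/x_1S$ --- has a genuine gap, and in fact two structural obstructions. First, the map $R/(x_1)\to S/x_1S$ is not injective in general: its kernel is $\bigl(x_1S\cap R\bigr)/(x_1)=(x_1)^*/(x_1)$, which is nonzero unless $(x_1)$ happens to be tightly closed; moreover $R/(x_1)$ is usually not a domain, so $S/x_1S$ is not the absolute integral closure of anything and the inductive hypothesis (which is a statement about excellent local domains and contraction from $R^+$) does not apply to the ``contracted data.'' Second, and independently, your lifting step produces a representation $a=\sum_{j\ge 2}x_jd_j+x_1w$ with $w\in S$ only; to conclude $a\in I(I^{n-1})^*$ you must land $w$ in $R$ \emph{and} in $I^{n-1}S$, and the colon-capturing identity $I^nS:_Sx_1=I^{n-1}S$ only controls membership in $I^{n-1}S$, not membership in $R$. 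You flag this yourself as ``the delicate point,'' but it is not bookkeeping --- it is the entire difficulty, and nothing in your sketch closes it.

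The paper avoids both problems by inducting on $n$ rather than on $h$. Given $z\in I\cap (I^{n+1})^*$, the hypothesis $VV_{n-1}$ supplies a decomposition $z=\sum_i a_ix_i$ with $a_i\in (I^{n-1})^*\subseteq R$; each $a_i$ is then expanded over degree-$(n-1)$ monomials in the $x$'s with auxiliary coefficients in $R^+$, and quasi-regularity of the sequence on $R^+$ (i.e.\ $\gr_{IR^+}(R^+)$ being a polynomial ring over $R^+/IR^+$) forces those auxiliary coefficients into $IR^+$, hence $a_i\in I^nR^+\cap R=(I^n)^*$. The point is that the elements which must end up in $R$ --- the $a_i$ --- are in $R$ from the outset, and only the disposable monomial coefficients live in $R^+$; no descent from $S$ to $R$ is ever required. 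If you want to salvage your induction on $h$, you would need a substitute for the contraction property in the non-domain quotient, which is precisely what is not available; I recommend switching to the induction on $n$.
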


 Recall that the plus closure $R^+$ or absolute integral closure of a Noetherian domain $R$ is the integral closure of $R$ in an algebraic closure of the quotient field of $R.$
One of the main  tools used  is the following result  due to Ian Aberbach.
\begin{theorem}\label{main}
 Let $I$ be a parameter ideal in an excellent local domain $R$. Then for all $n\geq 1,$
\begin{eqnarray*}
(I^n)^*=I^nR^+\cap R.
\end{eqnarray*} 
\end{theorem}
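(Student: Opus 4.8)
The plan is to establish the two containments separately: the inclusion $I^nR^+\cap R\subseteq (I^n)^*$ is formal and holds for every ideal, while the reverse inclusion is where the big Cohen--Macaulay property of $R^+$ is essential. For the first inclusion I would take $u\in I^nR^+\cap R$ and note that $u$ already lies in $I^nS$ for some module-finite extension domain $R\subseteq S\subseteq R^+$, since only finitely many elements of $R^+$ are needed to witness the membership. As $S$ is a finite torsion-free $R$-module there is a nonzero $c\in R$ with $cS\subseteq R$. Fixing $q=p^e$ and writing $u=\sum_i a_is_i$ with $a_i\in I^n$ and $s_i\in S$, the fact that the Frobenius is a ring homomorphism in characteristic $p$ gives $u^q=\sum_i a_i^qs_i^q\in (I^n)^{[q]}S$, whence $cu^q\in (I^n)^{[q]}(cS)\subseteq (I^n)^{[q]}$. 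Since the single nonzero element $c$ is independent of $q$, this is precisely the statement that $u\in (I^n)^*$. I expect this half to be routine.

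For the reverse inclusion $(I^n)^*\subseteq I^nR^+$ write $B=R^+$ and $I=(x_1,\dots,x_h)$ with $x_1,\dots,x_h$ parameters. Two features of $B$ are available: by the Hochster--Huneke theorem $B$ is a big Cohen--Macaulay $R$-algebra, so that $x_1,\dots,x_h$ is a regular sequence on $B$; and $B$ is perfect, i.e. its Frobenius is bijective, since $B$ is a domain of characteristic $p$ containing a $q$-th root of each of its elements. Given $u\in (I^n)^*$, fix $0\neq c$ with $cu^q\in (I^n)^{[q]}$ for all large $q$, and regard this relation inside $B$. Using perfectness I would write $cu^q=\sum_a\beta_a m_a^q$, where the $m_a$ are the monomial generators of $I^n$ and $\beta_a\in B$, and then replace each $\beta_a$ by $(\beta_a^{1/q})^q$; since raising to the $q$-th power is a ring homomorphism this exhibits $cu^q$ as a $q$-th power, namely $cu^q=\big(\sum_a\beta_a^{1/q}m_a\big)^q$. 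Extracting $q$-th roots, which is legitimate because the Frobenius of $B$ is injective, yields $c^{1/q}u=\sum_a\beta_a^{1/q}m_a\in I^nB$, and this holds for every large $q$.

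The problem has now been reduced to canceling the factor $c^{1/q}$, and this is the step I expect to be the main obstacle. One cannot do this by letting $q\to\infty$ or by declaring $c$ a nonzerodivisor modulo $I^nB$: in a big Cohen--Macaulay algebra the quotient $B/I^nB$ typically carries $R$-torsion once $n\geq 2$ (already the class of $x_2$ modulo $I^2B$ is killed by $x_1$ while $x_2\notin I^2B$), so a crude cancellation would only place $u$ in the integral closure $\overline{I^nB}$, which is strictly larger than $I^nB$ in general. To cancel $c^{1/q}$ honestly I would pass to local cohomology and adapt Karen Smith's proof that tight closure agrees with plus closure for parameter ideals: the relations $c^{1/q}u\in I^nB$ say that the image of $u$ becomes annihilated by arbitrarily high Frobenius roots of $c$ in a direct-limit module whose transition maps are injective precisely because $x_1,\dots,x_h$ is a regular sequence on the big Cohen--Macaulay algebra $B$, and this injectivity is what forces the image, hence $u$ itself, into $I^nB$.

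The genuinely new difficulty beyond the case $n=1$ is that one must detect membership in $I^n$ rather than in $I$. For $n=1$ the relevant module is the top local cohomology $H^h_I(B)$ and Smith's socle argument applies directly, but for $n\geq 2$ the ideal $I^n$ is no longer generated by a regular sequence, so I would instead exploit the graded structure $\gr_I(B)\cong (B/IB)[T_1,\dots,T_h]$ supplied by the regular sequence to split $B/I^nB$ into pieces that are free over $B/IB$ and reduce the $I^n$-membership test to the $I$-membership test level by level. Installing this graded refinement of Smith's local-cohomology argument, so that the injectivity still does the cancellation uniformly across the filtration, is where I expect the real work of the proof to lie.
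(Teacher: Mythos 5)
Your easy containment $I^nR^+\cap R\subseteq (I^n)^*$ is correct and complete (the paper obtains it instead from Hochster's solid-closure criterion, Theorem \ref{Hoc_94}, together with Theorem \ref{HocHun92bigCM}, but your module-finite descent with a common denominator $c$ is the standard argument and works). Your Frobenius-root manipulation is also correct as far as it goes: from $cu^q\in (I^n)^{[q]}R^+$ and the perfectness of $R^+$ you legitimately deduce $c^{1/q}u\in I^nR^+$ for all large $q$. The problem is that everything after that point is a plan rather than a proof, and the step you yourself defer --- showing that an element of $R^+/I^nR^+$ annihilated by $c^{1/q}$ for every $q$ must vanish --- is the entire content of the hard direction. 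You rightly note that naive cancellation fails because $R^+/I^nR^+$ carries torsion, but the proposed remedy (a ``graded refinement of Smith's local-cohomology argument'') is never carried out, and it hides a genuine difficulty: the level-by-level reduction through $\gr_I(R^+)\cong (R^+/IR^+)[T_1,\dots,T_h]$ produces coefficients $b_i\in R^+$ rather than in $R$, so the $n=1$ statement you would want to invoke (Theorem \ref{ksmith}, which concerns $IR^+\cap R$) does not apply to them. What your scheme actually requires is an ``almost zero implies zero'' assertion for $IR^+$ inside $R^+$ itself, a strictly stronger statement that you neither prove nor cite. As written, the argument establishes only $c^{1/q}u\in I^nR^+$ for all $q$, which by itself yields no more than a membership in a closure of $I^nR^+$.

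The paper's proof sidesteps all of this by a shorter route: induct on $n$, taking Smith's theorem ($n=1$) as a black box. Given $z\in(I^n)^*\subseteq (I^{n-1})^*$, the inductive hypothesis writes $z=\sum_i a_ix^{\mu_i}$ with $a_i\in R^+$ and $x^{\mu_i}$ ranging over the degree-$(n-1)$ monomials in the parameters; then $a_i\in \bigl((I^n+J_i)R^+\bigr)^*:x^{\mu_i}$, where $J_i$ is generated by the remaining degree-$(n-1)$ monomials, and the Aberbach--Huneke--Smith colon-capturing theorem for monomial ideals in parameters (Theorem \ref{ahs}), applied in a module-finite extension of $R$ containing the relevant coefficients and combined with Theorem \ref{ksmith} for that extension, forces $a_i\in IR^+$, whence $z\in I^nR^+$. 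To repair your write-up you should either prove the cancellation lemma in $R^+$ that you are implicitly assuming, or replace the second half of your argument by this induction-plus-colon-capturing step.
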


We present a short proof of Theorem \ref{main} due to Karen Smith in Section \ref{sec: genprop}. Several consequences of Theorem \ref{main} are proved. Let $\overline{I}$ denote the integral closure of an ideal $I.$ A celebrated result due to 
C. Huneke and  S. Itoh states that if $R$ is a Noetherian ring and $I$ is generated by a regular sequence  then  for all $n\geq 1,$
\[I^{n+1}\cap \overline{(I^n)}=\overline{I}I^n.\]
As a consequence of Theorem \ref{main}, we present a short proof of the following 
 result of Hochster and Huneke \cite[Theorem 8.2]{HH1994}.
 
 \begin{theorem}
  Let $R$ be an  excellent local domain and $I$ be a  parameter ideal. Then for all $n\geq 1,$
\[I^{n+1}\cap (I^n)^*=I^*I^n.\]
\end{theorem}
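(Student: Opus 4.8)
The plan is to prove the identity by moving all the relevant ideals into the absolute integral closure $S:=R^+$, where the deep Hochster--Huneke theorem makes $S$ a big Cohen--Macaulay $R$-algebra. Writing $I=(x_1,\dots,x_h)$ with $h=\hgt I$, the sequence $x_1,\dots,x_h$ extends to a system of parameters of $R$ and hence is a regular sequence on $S$; therefore $\gr_{IS}(S)\cong (S/IS)[T_1,\dots,T_h]$ is a polynomial ring and each graded component $I^mS/I^{m+1}S$ is $S/IS$-free on the degree-$m$ monomials in the $x_i$. I will use Theorem~\ref{main} throughout in the form $(I^m)^*=I^mS\cap R$ for all $m\ge1$, so that in particular $I^*=IS\cap R\subseteq IS$. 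The whole argument then reduces to comparing $I^*I^n$ with the elements of $I^n$ that fall into the tight closure of $I^{n+1}$, which is the tight-closure analogue of the Huneke--Itoh relation recalled just above.

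First I would settle the formal containment. Since $I^*\subseteq R$ we get $I^*I^n\subseteq I^n\subseteq(I^n)^*$, and since $I^*\subseteq IS$ we get $I^*I^n\subseteq (IS)(I^nS)=I^{n+1}S$; as $I^*I^n\subseteq R$, Theorem~\ref{main} gives $I^*I^n\subseteq I^{n+1}S\cap R=(I^{n+1})^*$. Thus every element of $I^*I^n$ simultaneously lies in $I^n$ and in the tight closure of $I^{n+1}$. This half is immediate once Theorem~\ref{main} is in hand and requires nothing beyond $I\subseteq I^*\subseteq IS$.

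The reverse containment is the heart of the matter. Take $z\in I^n$ that also lies in the tight closure of $I^{n+1}$; by Theorem~\ref{main} this places $z\in I^{n+1}S$, while $z\in I^n$ lets me write $z=\sum_{|\alpha|=n}r_\alpha x^\alpha$ with $r_\alpha\in R$. Read inside $S$, the element $z$ lies in both $I^nS$ and $I^{n+1}S$, so its image in $I^nS/I^{n+1}S=[\gr_{IS}(S)]_n$ is zero; but that image is $\sum_{|\alpha|=n}\overline{r_\alpha}\,T^\alpha$, and the freeness of $[\gr_{IS}(S)]_n$ over $S/IS$ forces every $\overline{r_\alpha}=0$, i.e.\ $r_\alpha\in IS$. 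Since $r_\alpha\in R$, Theorem~\ref{main} again gives $r_\alpha\in IS\cap R=I^*$, whence $z=\sum r_\alpha x^\alpha\in I^*I^n$. The one genuinely delicate input is the $S/IS$-linear independence of the leading forms $T^\alpha$: this is exactly the statement that $x_1,\dots,x_h$ is a regular sequence on $S$, which can fail in $R$ itself when $R$ is not Cohen--Macaulay and is supplied only by the big Cohen--Macaulay property of $R^+$. I expect this passage from ideal membership in $R$ to the vanishing of a leading form in $\gr_{IS}(S)$ to be the main obstacle, and the role of the Hochster--Huneke theorem is precisely to remove it.
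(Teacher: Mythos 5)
Your proposal is correct and follows essentially the same route as the paper's proof (Corollary \ref{extyt}): both reduce to $R^+$ via Theorem \ref{tytcl_frm_plus}, write an element of $I^n\cap(I^{n+1})^*$ as an $R$-combination of degree-$n$ monomials in the parameters, and use the fact that the parameters form a regular sequence on $R^+$ (Hochster--Huneke) to force the coefficients into $IR^+\cap R=I^*$. Your detour through the freeness of the graded pieces of $\gr_{IR^+}(R^+)$ is just the quasi-regularity reformulation of the regular-sequence step the paper invokes directly, and your easy inclusion via $I^*I^n\subseteq I^{n+1}R^+\cap R$ is a harmless variant of the paper's appeal to $I^*(I^n)^*\subseteq(I^{n+1})^*$.
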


In Section \ref{sec: G*(I)}, we calculate tight closure of powers of an  ideal generated by homogeneous system of parameters of same degree in hypersurface ring $R=\mathbb{F}_p[X_0,\ldots ,X_d]/(X_0^r+\cdots +X_d^r).$ We use these explicit formulas to show that $G^*(I)$ is {\CM} and find conditions under which $\mathcal R^*(I)$ is Cohen-Macaulay.
\begin{proposition} Let $R=\frac{\mathbb{F}_p[X_0,X_1,\ldots,X_{d}]}{(X_0^r+X_1^r+\cdots+X_d^r)},$ where $p\nmid r; r,d\geq2$ and $p>(d-1)r-d.$ Let $f_1,\ldots,f_d$ be homogeneous system of parameters in $R$ such that $\deg (f_i)=e$ for $1\leq i \leq d,$ $J=(f_1,\ldots,f_d)$ and $\fm =(x_0,x_1,\ldots ,x_d).$ Then\\
{\rm (a)} $(J^n)^*=J^n+\fm^{(n-1)e+de},$\\
{\rm (b)} $G^*(J)$ is Cohen Macaulay,\\
{\rm (c)} $r_J(\fm^e)=\left[ \frac{r-1-d}{e}\right]+d,$ $r^*(J)\le \left[ \frac{r-1-d}{e}\right]+1.$\\
{\rm (d)} If $\left[\frac{r-1-d}{e}\right] \leq d-2$ then $\mathcal R^*(J)$ is Cohen-Macaulay.
\end{proposition}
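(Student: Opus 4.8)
The plan is to treat part (a) as the heart of the matter and to deduce (b)--(d) from it together with the general results of the earlier sections. Throughout I work with the standard grading, so that $\fm^k=R_{\ge k}$, and I record that the exponent in (a) simplifies as $(n-1)e+de=(n+d-1)e$. Since $R$ is a normal standard graded domain, a degree count shows $\overline{\fm^k}=\fm^k$; as $J$ is a reduction of $\fm^e$ (for instance because $e(J)=e^d r=e(\fm^e)$), this gives $\overline{J^n}=\overline{\fm^{ne}}=\fm^{ne}$, so $(J^n)^*\subseteq\overline{J^n}=\fm^{ne}$ is concentrated in degrees $\ge ne$. The whole problem is therefore to decide, in each degree $m$ with $ne\le m<(n+d-1)e$, whether tight closure enlarges $J^n$, and to confirm that it captures everything in degrees $\ge (n+d-1)e$.

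For the inclusion $\fm^{(n+d-1)e}\subseteq (J^n)^*$ I would argue as follows. Fix $q=p^s$ and set $L=(f_1^q,\dots,f_d^q)$, a parameter ideal generated by forms of degree $eq$. Because $R$ is Cohen--Macaulay, $f_1^q,\dots,f_d^q$ is a regular sequence, so $\gr_L(R)\cong (R/L)[T_1,\dots,T_d]$; since $R$ is Gorenstein, $R/L$ is Gorenstein Artinian with top degree $a(R)+d\,eq=(r-d-1)+deq$. Reading off the internal degrees of $L^j/L^{j+1}$ for $0\le j\le n-1$ shows that the top degree of $R/L^n$ is $(r-d-1)+(n+d-1)eq$, i.e. $R_m\subseteq L^n=(J^n)^{[q]}$ whenever $m>(r-d-1)+(n+d-1)eq$. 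Now take any homogeneous $z$ with $\deg z\ge (n+d-1)e$ and the fixed nonzero element $c=x_0^{\,r-d}$; then $c\,z^{q}$ has degree $>(r-d-1)+(n+d-1)eq$, so $c\,z^{q}\in (J^n)^{[q]}$ for every $q$, whence $z\in (J^n)^*$. This proves $\fm^{(n+d-1)e}\subseteq (J^n)^*$.

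The reverse inclusion $(J^n)^*\subseteq J^n+\fm^{(n+d-1)e}$ is the main obstacle. By the previous paragraph it is equivalent to showing that the ideal $I_n:=J^n+\fm^{(n+d-1)e}$ is tightly closed, for then $(J^n)^*\subseteq (I_n)^*=I_n$. Since $(J^n)^*\subseteq\fm^{ne}$, the only thing to rule out is that tight closure adds a homogeneous element in some degree $m$ with $ne\le m<(n+d-1)e$. Here I would invoke Aberbach's description $(J^n)^*=J^nR^+\cap R$ (Theorem \ref{main}) and analyse $J^nR^+\cap R$ explicitly for the Fermat hypersurface: writing a putative witness $z=\sum f^{\alpha}g_\alpha$ with $g_\alpha\in R^+$ homogeneous and using the single relation $x_0^r=-(x_1^r+\cdots+x_d^r)$ to extract roots in $R^+$, membership should reduce to a congruence among multinomial coefficients. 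I expect the hypothesis $p>(d-1)r-d$ to be exactly what forces the relevant coefficient to be nonzero modulo $p$, which both legitimises the witness and sharpens the degree threshold to $(n+d-1)e$; pinning this coefficient down, and thereby establishing that $I_n$ is tightly closed, is the step I expect to be the most delicate.

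Granting (a), the remaining parts are short. Part (b) is immediate from the first theorem of the paper: after localising at $\fm$, $R$ is an excellent Cohen--Macaulay (indeed Gorenstein) local domain and $J$ is generated by a system of parameters, so $G^*(J)$ is Cohen--Macaulay (alternatively one reads it off from (a)). For (c), since $R$ is Gorenstein the Artinian ring $R/J$ has top degree $a(R)+de=(r-d-1)+de$, and $r_J(\fm^e)$ is the least $k$ with $(R/J)_m=0$ for all $m\ge e(k+1)$, i.e. the least $k$ with $e(k+1)>(r-d-1)+de$; a routine integer computation gives $r_J(\fm^e)=\lfloor (r-1-d)/e\rfloor+d$. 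The bound on $r^*(J)$ then follows from (a): since $J(J^n)^*=J^{n+1}+J\fm^{(n+d-1)e}$ and $(J^{n+1})^*=J^{n+1}+\fm^{(n+d)e}$, the equality $(J^{n+1})^*=J(J^n)^*$ holds once $\fm^{(n+d)e}=J\fm^{(n+d-1)e}$, which by the reduction-number computation happens as soon as $n+d-1\ge r_J(\fm^e)$, i.e. $n\ge \lfloor (r-1-d)/e\rfloor+1$; hence $r^*(J)\le \lfloor (r-1-d)/e\rfloor+1$. Finally, for (d) I would apply the filtration analogue of the Goto--Shimoda/Trung--Ikeda criterion: $J$ is a minimal reduction of the Noetherian filtration $\{(J^n)^*\}$ with reduction number $r^*(J)$, and once $G^*(J)$ is Cohen--Macaulay one has $\mathcal R^*(J)$ Cohen--Macaulay if and only if $a\big(G^*(J)\big)<0$, which for Cohen--Macaulay $G^*(J)$ is forced by $r^*(J)\le d-1$. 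The hypothesis $\lfloor (r-1-d)/e\rfloor\le d-2$ gives $r^*(J)\le d-1$ by (c), so $\mathcal R^*(J)$ is Cohen--Macaulay.
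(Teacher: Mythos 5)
Your proof of part (a) is incomplete at exactly the point you yourself flag as ``the most delicate.'' The easy inclusion $\fm^{(n+d-1)e}\subseteq (J^n)^*$ is fine (the $a$-invariant count for $R/(f_1^q,\dots,f_d^q)^n$ with the fixed multiplier $x_0^{r-d}$ is a correct and complete argument, and it does not even use the hypothesis on $p$). But the reverse inclusion $(J^n)^*\subseteq J^n+\fm^{(n+d-1)e}$ is the substance of (a), and for it you offer only a speculative plan: write a witness in $J^nR^+\cap R$, extract roots using the Fermat relation, and hope that $p>(d-1)r-d$ forces some multinomial coefficient to be a unit. No such computation is carried out, and it is far from clear it can be, since $R^+$ is enormous and a general element of $J^nR^+\cap R$ need not have the special shape you posit. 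The mechanism actually at work is different: the hypothesis $p>(d-1)r-d$ is precisely the condition under which the Fermat hypersurface satisfies the Strong Vanishing Conjecture (Huneke, \emph{Tight closure, parameter ideals, and geometry}, Theorem 6.1), and the paper then applies the Huneke--Smith theorem (\emph{Tight closure and the Kodaira vanishing theorem}, Theorem 5.11), which computes $(J^n)^*$ exactly as $J^n$ plus the forms of degree at least $b-d e$, where $b$ is the common back twist in the graded free resolution of $R/J^n$. That twist is found by descending from $S=\mathbb{F}_p[f_1,\dots,f_d]$ (over which $R$ is free) and identifying the socle of $S/\mathfrak{n}^n$ with $\mathfrak{n}^{n-1}/\mathfrak{n}^n$, concentrated in degree $(n-1)e$; this is the content of Lemmas \ref{Soc_twist_obs} and \ref{Soc_twist_lemma}. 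Without some substitute for this vanishing-theorem input, your part (a) does not close, and everything downstream depends on it.

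Parts (b), (c), (d) are correct granted (a), and essentially follow the paper: (b) is the Valabrega--Valla computation $J\cap (J^n)^*=J(J^{n-1})^*$ read off from the formula in (a) (your alternative of localising at $\fm$ and citing Theorem \ref{thm: main} quietly assumes that $(J^n)^*$ is compatible with passing to $R_\fm$, so the direct route is preferable); (c) is a self-contained derivation of the reduction number that the paper instead cites from Marley's thesis, and your computation of $r^*(J)$ from it matches the paper's; (d) is the same application of the Cohen--Macaulayness criterion for Rees algebras of filtrations (the paper cites Viet's theorem) via $r^*(J)\le d-1$.
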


\noindent
{\bf Acknowledgements.} We thank Karen Smith for discussions and for explaining us a simple proof of Theorem \ref{main}.
 
\section{\bf Some properties of the tight closure of powers of parameter ideals}\label{sec: genprop}
In this section, we assume that $R$ is a Noetherian local ring of prime characteristic $p.$ For a ring $R,$ the subset of $R$ consisting of all the elements which are not contained in any minimal prime ideals of $R,$ is denoted by $R^0.$   Let $q=p^e.$ If $I=(a_1,\ldots,a_r)$ is an ideal of $R$ then $I^{[q]}=(a_1^q,\ldots,a_r^q).$ For any positive integer $n,$ $(I^n)^{[q]}=(I^{[q]})^n.$ An element $x\in R$ is said to be in the \textbf{tight closure $I^*$} of $I$ if there exists $c\in R^0$ such that $cx^q\in I^{[q]},$ for all sufficiently large $q.$
An ideal $I$ is called tightly closed if $I=I^*.$  The tight filtration of $I$ is given by $\mathcal{T}=\lbrace(I^n)^*\rbrace_{n\in \mathbb{Z}}.$ We note that $(I^n)^*\subset \overline{I^n},$ for all $n\geq 1.$

We begin by recalling a few  results about big Cohen-Macaulay algebras and modules needed later.
Let $R$ be an integral domain with fraction field $K.$ Let $\overline{K}$ be  an  algebraic closure of $K.$ The integral closure of $R$ in $\overline{K},$ denoted $R^+,$ is called the {\bf absolute integral closure of $R.$}  Let $(R,\m)$ be a local Noetherian ring and $M$ be a non-zero $R$-module, we say that $M$ is a \textbf{big Cohen-Macaulay} $R$-module if some system of parameters for $R$ is a regular sequence on $M.$  We say that
$M$ is a \textbf{balanced big Cohen-Macaulay} $R$-module if every system of parameters for $R$ is a regular sequence on $M.$
  We need the following  results for the proof of the Main Theorem. 
\begin{theorem}{{\bf Colon capturing, \cite[Theorem 10.1.9]{BH}}}
\label{col_cap1}
Let $R$ be an equidimensional residue class ring of a Cohen-Macaulay local ring $A$, and $x_1,x_2,\ldots, x_t$ be  parameters of $R.$ Then for
$j=0,1,\ldots,t-1.$
$$(x_1,\ldots,x_j):_Rx_{j+1}\subset (x_1,\ldots,x_j)^*.$$
\end{theorem}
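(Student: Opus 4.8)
The plan is to reduce to the case where $R$ is a complete local \emph{domain} and then exploit the fact, central to this paper, that in prime characteristic the absolute integral closure $R^+$ is a big Cohen--Macaulay algebra on which \emph{every} system of parameters is a regular sequence, i.e. a balanced big Cohen--Macaulay algebra \cite{HocHun1992}. First I would pass to the completion: since $A$ is Cohen--Macaulay so is $\widehat A$, the images of $x_1,\dots,x_t$ remain parameters, and colons commute with the faithfully flat map $R\to\widehat R$; this is the standard and harmless reduction that lets us assume $R$ complete. Writing $R=A/\mathfrak{a}$ with $A$ complete Cohen--Macaulay, catenarity of $A$ gives $\dim A/\mathfrak p=\dim A-\hgt\mathfrak p$ for every prime, so equidimensionality of $R$ forces every minimal prime of $\mathfrak{a}$ to have height exactly $h=\dim A-\dim R$. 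Thus $\mathfrak{a}$ is unmixed, and the minimal primes $\mathfrak{p}_1,\dots,\mathfrak{p}_s$ of $R$ all satisfy $\dim R/\mathfrak{p}_i=\dim R$.

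The core is the domain case. Extend $x_1,\dots,x_j,x_{j+1}$ to a full system of parameters $x_1,\dots,x_d$ of $R$. Since $R^+$ is a balanced big Cohen--Macaulay $R$-algebra, $x_1,\dots,x_d$ is a regular sequence on $R^+$, so $x_{j+1}$ is a nonzerodivisor modulo $(x_1,\dots,x_j)R^+$. For $u\in(x_1,\dots,x_j):_R x_{j+1}$ we have $u x_{j+1}\in(x_1,\dots,x_j)R^+$, whence $u\in(x_1,\dots,x_j)R^+\cap R$. It then suffices to prove the easy half of the plus-closure description,
\[
I R^+\cap R\ \subseteq\ I^*\qquad\text{for every ideal }I\subseteq R .
\]
Given $u\in IR^+\cap R$, choose a module-finite domain extension $R\subseteq S\subseteq R^+$ with $u\in IS$. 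As Frobenius is additive in characteristic $p$, $u^q\in I^{[q]}S$ for all $q=p^e$. Because $S$ has positive rank over the domain $R$ there is a nonzero $R$-linear functional $\theta\colon S\to R$ with $c:=\theta(1)\neq0$, and then $c\,u^q=u^q\theta(1)=\theta(u^q)\in I^{[q]}$ for all $q$, so $u\in I^*$. This is exactly the inclusion underlying Theorem \ref{main}, used in the direction that needs no parameter hypothesis.

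Finally I would remove the domain hypothesis by descending to the minimal primes. Passing first to $R_{\mathrm{red}}$ (which is again equidimensional, with the same $\mathfrak p_i$) and then to each $R/\mathfrak{p}_i$, note that by the height bookkeeping above the images of $x_1,\dots,x_{j+1}$ remain parameters in each complete local domain $R/\mathfrak{p}_i$. Applying the domain case to each quotient yields, for $u\in(x_1,\dots,x_j):_R x_{j+1}$, elements $c_i\notin\mathfrak{p}_i$ with $c_i u^q\in(x_1^q,\dots,x_j^q)R+\mathfrak{p}_i$ for all large $q$. The remaining task is to splice these into a single multiplier $c\in R^0$ with $c u^q\in(x_1^q,\dots,x_j^q)R$, for which one uses $\bigcap_i\mathfrak{p}_i=\sqrt{0}$ and prime avoidance against $\{\mathfrak p_1,\dots,\mathfrak p_s\}$ to produce $c\notin\bigcup_i\mathfrak{p}_i$. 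I expect this gluing across components and through nilpotents, rather than the big Cohen--Macaulay input, to be the main obstacle: one must track how the per-component test elements $c_i$ combine while keeping the product outside every minimal prime, and here the unmixedness of $\mathfrak{a}$ — the absence of height-$h$ embedded primes — is precisely what makes the bookkeeping close. (An alternative avoiding the existence of $R^+$ altogether is the purely equational Frobenius argument of \cite{BH}, realizing $R$ as $S/\mathfrak{a}$ with $S$ regular and running the same multiplier estimate inside $S$; I would fall back on it only if the completion or component reductions prove troublesome.)
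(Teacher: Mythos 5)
Your core argument --- the domain case via $R^+$ together with the trace map $\theta$ --- is correct, but only for \emph{excellent} (e.g.\ complete) local domains, which is exactly why you complete first; and that completion step, which you dismiss as ``standard and harmless,'' is where the proof has a genuine gap. You check that the hypotheses ascend to $\widehat R$ (colons commute with flat base change, parameters stay parameters, $\widehat A$ is Cohen--Macaulay), but you never address how the \emph{conclusion} descends. What your argument ultimately produces is a multiplier $c\in \widehat R^{\,0}$ (assembled from elements of the quotients $\widehat R/\mathfrak{p}_i$) with $cu^q\in(x_1,\ldots,x_j)^{[q]}\widehat R$ for large $q$; to conclude $u\in(x_1,\ldots,x_j)^*$ \emph{computed in $R$} you need a multiplier in $R^0$, i.e.\ the containment $(I\widehat R)^*\cap R\subseteq I^*$. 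That is the hard direction of comparing tight closure with completion: the easy direction is $I^*\subseteq (I\widehat R)^*\cap R$, while the direction you need is known only through the Hochster--Huneke theory of completely stable test elements, which requires excellence --- and a residue class ring of a Cohen--Macaulay local ring, the actual hypothesis of the theorem, need not be excellent. So as written your argument proves colon capturing for $\widehat R$, not for $R$. (Two smaller points in the same step: equidimensionality of $\widehat R$ is Ratliff's quasi-unmixedness theorem, not a formality; and the gluing over minimal primes that you flag as ``the main obstacle'' is actually the routine part --- take $d_i\in\bigcap_{k\neq i}\mathfrak{p}_k\setminus\mathfrak{p}_i$ and $c=\sum_i d_ic_i$, handling nilpotents by raising to a $p$-th power.)

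For comparison: the paper gives no proof of this statement at all --- it quotes \cite[Theorem 10.1.9]{BH} --- and the proof there avoids completion, $R^+$, and excellence entirely, which is why it works at this level of generality. One writes $R=A/\mathfrak{a}$, uses equidimensionality plus catenarity of $A$ to get $\hgt\mathfrak{a}=h=\dim A-\dim R$, lifts the $x_i$ to $\tilde x_i\in A$, and chooses $y_1,\ldots,y_h\in\mathfrak{a}$ so that $y_1,\ldots,y_h,\tilde x_1,\ldots,\tilde x_t$ are parameters, hence a genuine regular sequence, in the Cohen--Macaulay ring $A$. A primary decomposition of $(y_1,\ldots,y_h)$ yields $c\in A$ outside every minimal prime of $\mathfrak{a}$ and an $N$ with $c\,\mathfrak{a}^N\subseteq(y_1,\ldots,y_h)$, hence $c\,\mathfrak{a}^{[q]}\subseteq c\,\mathfrak{a}^q\subseteq(y_1,\ldots,y_h)$ for $q\geq N$; applying Frobenius to a lift of the relation $ux_{j+1}\in(x_1,\ldots,x_j)R$ and using that $\tilde x_{j+1}^q$ is a nonzerodivisor modulo $(y_1,\ldots,y_h,\tilde x_1^q,\ldots,\tilde x_j^q)$ gives $cu^q\in(x_1^q,\ldots,x_j^q)R$ with $c$ already in $R^0$. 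If you want to salvage your route, you must either strengthen the hypothesis to ``excellent local ring,'' after which the completion is unnecessary and you can invoke Theorem \ref{HocHun92bigCM} directly on each $R/\mathfrak{p}_i$, or supply the test-element descent along $R\to\widehat R$, which is substantially deeper than the theorem you are trying to prove.
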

A very useful version of colon capturing is described in the next result.

\begin{theorem}
{\normalfont\cite[Theorem 2.2]{ahs}}\label{ahs}
Let $(R,\m)$ be an equidimensional excellent local ring.  Let
 $A=\Z/p\Z[x_1, x_2, \ldots, x_d]$ be a subring of $R$ and  $x_1, x_2, \ldots, x_d$ be a system of parameters for $R.$ Let $I$ and $J$ be ideals of $A$ generated by monomials in $x_1, x_2, \ldots, x_d.$ Then 
\begin{eqnarray*}
(IR)^*:_RJR)&=& ((I:_AJ)R)^*\\
(IR)^*\cap (JR)^*&=& ((I\cap J)R)^*
\end{eqnarray*}

\end{theorem}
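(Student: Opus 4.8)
The plan is to prove each of the two equalities by separating a formal ``easy inclusion'' from a ``hard inclusion'' that is routed through the regular polynomial subring $A$ and through the absolute integral closure $R^{+}$. Since colons, intersections, and tight closures all behave well under completion, and since equidimensionality allows one to pass to $R/\mathfrak p$ for the minimal primes $\mathfrak p$ when computing tight closures, I would first reduce by standard arguments to the case where $R$ is a complete local domain. In that case $B:=R^{+}$ is a balanced big Cohen--Macaulay $R$-algebra by Hochster--Huneke \cite{HocHun1992}, so $x_1,\dots,x_d$ is a regular sequence on $B$, and $R$ has a test element $c$ because it is excellent and reduced. The test element is the device that will let me summarise all Frobenius-power estimates by a single fixed multiplier.

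Two computational facts drive the argument. In the polynomial ring $A$ the Frobenius bracket scales exponent vectors by $q$, so for monomial ideals $I^{[q]}:_A J^{[q]}=(I:_A J)^{[q]}$ and $I^{[q]}\cap J^{[q]}=(I\cap J)^{[q]}$. In $B$, because $x_1,\dots,x_d$ is a regular sequence, monomial ideals obey the same arithmetic as in $A$, namely $(I'B:_B J'B)=(I':_A J')B$ and $I'B\cap J'B=(I'\cap J')B$ for all monomial ideals $I',J'$ of $A$; these I would verify by induction on the length of the regular sequence. Combined with the containment of the plus closure in the tight closure, $\mathfrak aB\cap R\subseteq \mathfrak a^{*}$ for every ideal $\mathfrak a$ of $R$, these yield the monomial forms of colon and intersection capturing (a strengthening of Theorem \ref{col_cap1}): $(I^{[q]}R:_R J^{[q]}R)\subseteq \big(((I:_A J)R)^{[q]}\big)^{*}$ and $I^{[q]}R\cap J^{[q]}R\subseteq \big(((I\cap J)R)^{[q]}\big)^{*}$, for every $q$.

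For the colon, the inclusion $\supseteq$ is formal: $(I:_A J)R\cdot JR\subseteq IR\subseteq (IR)^{*}$ shows $(I:_A J)R\subseteq ((IR)^{*}:_R JR)$, and a colon of a tightly closed ideal is again tightly closed (this follows from $(K:L)^{[q]}\subseteq K^{[q]}:L^{[q]}$ together with $K=K^{*}$), so taking tight closures gives $((I:_A J)R)^{*}\subseteq ((IR)^{*}:_R JR)$. For $\subseteq$, take $z\in ((IR)^{*}:_R JR)$; applying the test element to each relation $z\mu\in (IR)^{*}$ for the monomial generators $\mu$ of $J$ gives $c\,z^{q}J^{[q]}R\subseteq I^{[q]}R$, that is, $c\,z^{q}\in (I^{[q]}R:_R J^{[q]}R)$ for all $q$. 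Monomial colon capturing then places $c\,z^{q}$ in $\big(((I:_A J)R)^{[q]}\big)^{*}$, and applying the test element a second time strips the outer star with a single fixed multiplier: $c^{2}z^{q}\in ((I:_A J)R)^{[q]}$ for all large $q$, whence $z\in ((I:_A J)R)^{*}$. The intersection equality is entirely parallel: $\supseteq$ is monotonicity of tight closure, and for $\subseteq$ one applies the test element to $z\in (IR)^{*}\cap (JR)^{*}$ to land $c\,z^{q}$ in $I^{[q]}R\cap J^{[q]}R$, uses monomial intersection capturing, and removes the star with the test element.

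The crux, and the step I expect to be the main obstacle, is the passage through $B=R^{+}$: proving that the regular-sequence property forces monomial colons and intersections in $B$ to be the expected extensions, and converting those $B$-statements into tight-closure statements in $R$ via $\mathfrak aB\cap R\subseteq \mathfrak a^{*}$. Everything else is either formal or a direct exponent computation in $A$; what makes the Frobenius-power bookkeeping legitimate is the twin use of a test element, which replaces the otherwise growing sequence of multipliers by one fixed element, so that ``$c\,z^{q}\in(\text{bracket})^{*}$ for all $q$'' genuinely certifies membership in the tight closure rather than in an iterated closure.
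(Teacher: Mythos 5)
The paper never proves this statement: it is imported verbatim from Aberbach--Huneke--Smith \cite[Theorem 2.2]{ahs} and used as a black box, so there is no in-paper proof to compare against. Judged on its own merits, your proposal is essentially correct, and it reconstructs the standard argument for results of this kind (which is also the spirit of the cited source): reduce to a complete local domain, use that $R^{+}$ is a balanced big Cohen--Macaulay algebra \cite{HocHun1992}, transfer monomial colon and intersection arithmetic across the regular sequence $x_1,\dots,x_d$ in $R^{+}$, return to $R$ via the easy containment $\mathfrak{a}R^{+}\cap R\subseteq \mathfrak{a}^{*}$, and use a test element twice to keep a single fixed multiplier through the Frobenius bookkeeping. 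Your ``easy'' directions are also right: $(I:_AJ)J\subseteq I$ handles the colon, the colon of a tightly closed ideal is tightly closed by the argument you indicate, and monotonicity handles the intersection. Importantly, your route only needs plus closure inside tight closure, never the hard containment $(IR)^{*}\subseteq IR^{+}$, which is the correct way around.

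Two steps deserve more care than your sketch gives them, though neither is a genuine gap. First, the monomial-transfer lemma in $B=R^{+}$ (that $I'B:_BJ'B=(I':_AJ')B$ and $I'B\cap J'B=(I'\cap J')B$) must be proved in a non-Noetherian ring; the induction works, but it repeatedly uses regularity of the $x_i$ in permuted orders, so the \emph{balanced} property of $R^{+}$ (every system of parameters of $R$, in any order, is a regular sequence on $R^{+}$) is genuinely needed, and you do invoke it. Second, the reduction to a complete local domain is standard but not free: the monomial ideals here are typically not $\m$-primary, so commuting tight closure with completion requires completely stable test elements (available after first killing nilpotents, since $R$ is excellent), and passing to $\hat R/\mathfrak p$ for minimal primes $\mathfrak p$ with the images of $x_1,\dots,x_d$ still a system of parameters is exactly where the equidimensional-plus-excellent (quasi-unmixed) hypothesis enters. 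With those two points made explicit, your argument is a complete and correct proof.
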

\begin{theorem} {\normalfont\cite[Theorem 11.1]{Hochster1994a}}\label{Hoc_94}
Let $(R,\m)$ be a complete local domain of prime characteristic. Then for any $R$-ideal $I$, $u\in I^*$
if and only if $u\in IB\cap R,$ where $B$ is some big Cohen–Macaulay $R$-algebra.
\end{theorem}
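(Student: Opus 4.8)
The plan is to prove the two implications separately, with Hochster's theory of solid closure as the organizing framework and the fact (Hochster--Huneke) that the absolute integral closure $R^{+}$ of a complete local domain of characteristic $p$ is a balanced big Cohen--Macaulay algebra as the basic source of such algebras. Throughout write $I=(a_{1},\dots,a_{n})$, $q=p^{e}$, and $d=\dim R$.

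For the implication that $u\in IB\cap R$ for some big Cohen--Macaulay algebra $B$ forces $u\in I^{*}$, I would first pass to the balanced case: any big Cohen--Macaulay $R$-algebra maps to a balanced one $B'$, and since the image of $u$ still lies in $IB'$, one may replace $B$ by $B'$. The key structural input is that a balanced big Cohen--Macaulay algebra over a complete local domain is \emph{solid}, i.e. $\Hom_{R}(B,R)\neq 0$; I would invoke this as a result of Hochster, reflecting the nonvanishing of the top local cohomology $H^{d}_{\m}(B)$ together with local duality over the complete ring $R$. Choosing $b_{0}\in B$ with $\phi(b_{0})=c\neq 0$ for some nonzero $R$-linear $\phi\colon B\to R$, the rest is a one-line Frobenius computation: from $u=\sum a_{i}b_{i}$ in $B$ and $\operatorname{char}B=p$ we get $u^{q}=\sum a_{i}^{q}b_{i}^{q}\in I^{[q]}B$, hence $u^{q}b_{0}\in I^{[q]}B$; applying $\phi$ and using $u^{q}\in R$ gives $cu^{q}\in I^{[q]}$ for all $q$. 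As $R$ is a domain, $c\in R^{0}$, so $u\in I^{*}$. Note this half only uses that $B$ is solid.

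For the converse, that $u\in I^{*}$ produces a big Cohen--Macaulay algebra $B$ with $u\in IB\cap R$, the natural device is the \emph{forcing algebra}
\[
B_{0}=R[T_{1},\dots,T_{n}]/(u-a_{1}T_{1}-\cdots-a_{n}T_{n}),
\]
in which $u=\sum a_{i}T_{i}\in IB_{0}$ by construction; it then suffices to show that $B_{0}$ is a \emph{seed}, i.e. admits an $R$-algebra map to some big Cohen--Macaulay algebra $B$, since then $u\in IB_{0}$ maps to $u\in IB\cap R$. To establish this I would route through solid closure: by the Hochster--Huneke theorem that tight closure and solid closure coincide for complete local domains of characteristic $p$, the hypothesis $u\in I^{*}$ places $u$ in the solid closure of $I$, which by definition says exactly that $B_{0}$ is solid. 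I would then invoke the seed theorem of Hochster--Huneke, that over a complete local domain of characteristic $p$ a solid $R$-algebra maps to a balanced big Cohen--Macaulay algebra, and compose $B_{0}\to B$.

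The main obstacle is precisely this converse, and within it the step that the forcing algebra, being solid, is a seed. One cannot shortcut it by taking $B=R^{+}$: that would require $I^{*}\subseteq IR^{+}\cap R$, i.e. agreement of tight closure with plus closure, which holds for parameter ideals but fails for general ideals, so the big Cohen--Macaulay algebra genuinely must be manufactured from the data $(I,u)$. The hands-on proof of the seed property runs through Hochster--Huneke's algebra-modification construction: one fixes a system of parameters, repeatedly adjoins variables to kill the relations obstructing the regular-sequence condition, and must show that the resulting direct limit does not collapse to the zero ring. The non-collapse is exactly where characteristic $p$ is used, since a collapse would, after applying Frobenius and tracking it back through the defining relation of $B_{0}$, yield for every $c\in R^{0}$ some $q$ with $cu^{q}\notin I^{[q]}$, contradicting $u\in I^{*}$. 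Assembling the solid-closure comparison with this seed construction is the technical heart; the remaining bookkeeping (passing to a balanced big Cohen--Macaulay algebra) is routine.
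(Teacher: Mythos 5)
The paper itself contains no proof of this statement: it is quoted verbatim as Theorem 11.1 of Hochster's \emph{Solid closure} paper \cite{Hochster1994a} and used as a black box, so your attempt can only be measured against Hochster's own argument. Your first implication is correct and essentially complete: replacing $B$ by its $\m$-adic completion gives a balanced big Cohen--Macaulay algebra, such an algebra over a complete local domain is solid because $H^d_{\m}(B)\neq 0$, and the computation $cu^q=\phi(u^q b_0)\in I^{[q]}$ for a fixed nonzero $\phi\in\Hom_R(B,R)$, fixed $b_0$ with $\phi(b_0)=c\neq 0$, and all $q$ is exactly the standard argument; as you note, this half uses only solidity.

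The gap is in the converse, at precisely the step you single out. The ``seed theorem'' you invoke --- that over a complete local domain of characteristic $p$ \emph{every} solid $R$-algebra maps to a big Cohen--Macaulay algebra --- is not a theorem of Hochster--Huneke or of anyone else, and no proof of the present statement may cite it. In equal characteristic zero the implication is false: by Roberts' example the forcing algebra of $x^2y^2z^2\in(x^3,y^3,z^3)$ over a power series ring in three variables is solid, yet it cannot be a seed because big Cohen--Macaulay algebras over regular local rings are pure, so $IB\cap R=I$ there. In characteristic $p$ the implication is a question raised by Hochster and studied later by Dietz in his work on seeds, not an available result. What Hochster actually proves is the much weaker statement that the \emph{particular} forcing algebra $B_0$ of a relation $u\in I^*$ is a seed, and the non-collapse of the algebra-modification process is deduced not from solidity of $B_0$ but from the full characteristic-$p$ hypothesis $cu^q\in I^{[q]}$ for all $q$: one assumes a finite sequence of modifications trivializes, applies Frobenius to the whole sequence, expands, and multiplies through by a suitable $c$ to reach a contradiction. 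Your closing paragraph gestures at exactly this argument, but it is the entire technical heart of the theorem and is left unexecuted; as written, your proof rests either on an unavailable black box (solid $\Rightarrow$ seed) or on a key lemma (no sequence of modifications of $B_0$ collapses) that is asserted rather than proved.
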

\begin{theorem} {\normalfont\cite[Theorem 1.1]{HocHun1992}}\label{HocHun92bigCM}
Let $(R,\m)$ is an excellent local domain of prime characteristic $p.$ Then $R^+$ is a big Cohen-Macaulay $R$-algebra.
\end{theorem}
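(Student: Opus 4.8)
The whole proposition is designed to flow from two structural facts about $R$, which I would establish first. Since $R=S/(F)$ with $S=\mathbb{F}_p[X_0,\dots,X_d]$ and $F=X_0^r+\cdots+X_d^r$ of degree $r$, it is a standard graded Gorenstein domain of dimension $d$ with $a$-invariant $a(R)=r-(d+1)$. Because $R$ is Cohen--Macaulay and $f_1,\dots,f_d$ is a homogeneous system of parameters of degree $e$, the multiplicity formula for a homogeneous system of parameters gives $e(J)=e^{d}e(R)=e(\fm^{e})$; as $J\subseteq\fm^{e}$ and $R$ is a (formally equidimensional) domain, Rees's multiplicity theorem forces $\overline{J}=\overline{\fm^{e}}$, so $J$ is a reduction of $\fm^{e}$ and $\overline{J^{m}}=\overline{\fm^{me}}\supseteq\fm^{me}$ for all $m$. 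Killing the regular sequence $f_1,\dots,f_d$ of degree $e$ raises the $a$-invariant by $de$, so $A=R/J$ is Artinian Gorenstein with top nonzero (socle) degree $s=a(R)+de=r-d-1+de$; equivalently, writing $R=\bigoplus_j R_j$ and $\fm^{k}=\bigoplus_{j\ge k}R_j$, we have $(R/J)_j=0$ precisely for $j>s$. These two facts --- ``$J$ is a reduction of $\fm^{e}$'' and ``$s=r-d-1+de$'' --- drive everything.

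For part (a) I would prove the two inclusions separately, noting $(n-1)e+de=(n+d-1)e$. The inclusion $J^{n}+\fm^{(n+d-1)e}\subseteq (J^{n})^{*}$ is the easy half: the Brian\c{c}on--Skoda theorem for tight closure applied to the $d$-generated ideal $J$ gives $\overline{J^{\,n+d-1}}\subseteq (J^{n})^{*}$, and by the reduction fact $\fm^{(n+d-1)e}\subseteq\overline{\fm^{(n+d-1)e}}=\overline{J^{\,n+d-1}}$. The reverse inclusion $(J^{n})^{*}\subseteq J^{n}+\fm^{(n+d-1)e}$ is the heart of the proof: I must show that no homogeneous $z$ with $\deg z<(n+d-1)e$ and $z\notin J^{n}$ lies in $(J^{n})^{*}$. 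A direct degree count against the generators of $(J^{[q]})^{n}$, which sit in degree $neq$, only forces $\deg z\ge ne$ and is too weak; the missing $(d-1)e$ must be extracted from the hypersurface relation. I would therefore lift the tight-closure test to $S$, writing $c\,z^{q}\in (\widetilde J^{[q]})^{n}+(F)$ for a test element $c$ of controlled degree (available because $R$ has an isolated singularity, hence is $F$-rational on the punctured spectrum), and analyze it through the Frobenius action on the top local cohomology $H^{d}_{\fm}(R)\cong (0:_{E}F)$ with $E=H^{d+1}_{\fm}(S)$, where the class attached to $z$ is shifted in degree by $de=\deg(f_1\cdots f_d)$. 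I expect this Frobenius/local-cohomology bookkeeping to be the main obstacle, and it is exactly here that the hypothesis $p>(d-1)r-d$ enters: it guarantees that the powers of $F$ occurring in the test cannot reach the low-degree monomials, so the tight-closure threshold is exactly $(n+d-1)e$ and no small-characteristic element slips in below it.

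Parts (b) and (c) are then formal consequences. For (b), after localizing at $\fm$, $R$ is an excellent Cohen--Macaulay local domain and $J$ is generated by a system of parameters, so $G^{*}(J)$ is Cohen--Macaulay by the theorem on $\depth G^{*}(I)$ stated in the introduction; alternatively it can be read directly off the filtration in (a). For (c), I combine (a) with $s=r-d-1+de$ and the standard-graded identity $\fm^{me}/\fm^{(m+1)e}\cong\bigoplus_{j=me}^{(m+1)e-1}R_j$. Since $(J\fm^{ke})_j=J_j$ and $(\fm^{(k+1)e})_j=R_j$ for $j\ge(k+1)e$, the equality $\fm^{(k+1)e}=J\fm^{ke}$ holds iff $(R/J)_j=0$ for all $j\ge(k+1)e$, i.e.\ iff $(k+1)e>s$; solving $k>d-1+\frac{r-1-d}{e}$ for the least integer $k$ gives $r_J(\fm^{e})=[\frac{r-1-d}{e}]+d$. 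The same computation applied to $(J^{n+1})^{*}=J(J^{n})^{*}$ via the formula in (a) shows the identity holds once $ne>r-1-d$, which yields $r^{*}(J)\le[\frac{r-1-d}{e}]+1$.

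For part (d) I would invoke the Ikeda--Trung/Goto--Shimoda criterion in its filtered form: for the $\fm$-primary filtration $\{(J^{n})^{*}\}$ in the Cohen--Macaulay ring $R$ of dimension $d$, the Rees algebra $\mathcal R^{*}(J)$ is Cohen--Macaulay iff $G^{*}(J)$ is Cohen--Macaulay and $a(G^{*}(J))<0$. The first condition is (b). For the second, the $a$-invariant of $G^{*}(J)$ is controlled by the tight reduction number: the hypothesis $[\frac{r-1-d}{e}]\le d-2$ gives $r^{*}(J)\le d-1$ by (c), whence $a(G^{*}(J))\le r^{*}(J)-d\le -1<0$, and $\mathcal R^{*}(J)$ is Cohen--Macaulay. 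The only point needing care is the precise dictionary between $a(G^{*}(J))$ and the tight reduction number for the filtration $\{(J^{n})^{*}\}$, which is routine once $G^{*}(J)$ is known to be Cohen--Macaulay of dimension $d$.
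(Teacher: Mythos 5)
Your proposal does not address the statement at hand. The statement is Theorem \ref{HocHun92bigCM}: for an excellent local domain $(R,\m)$ of prime characteristic $p$, the absolute integral closure $R^+$ is a big Cohen--Macaulay $R$-algebra, i.e.\ some (equivalently, every) system of parameters of $R$ is a regular sequence on $R^+$. Nowhere in your argument does $R^+$, the absolute integral closure, or the notion of a (big) Cohen--Macaulay algebra even appear. What you have written is instead an attempted proof of Proposition \ref{tytclhomsop} --- the computation of $(J^n)^*$ for a homogeneous system of parameters of equal degree $e$ in the Fermat hypersurface $\mathbb{F}_p[X_0,\ldots,X_d]/(X_0^r+\cdots+X_d^r)$, together with the Cohen--Macaulayness of $G^*(J)$, the reduction-number formulas, and the criterion for $\mathcal R^*(J)$ to be Cohen--Macaulay. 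That is a different result, appearing in a different section of the paper, with entirely different hypotheses (a specific graded hypersurface versus an arbitrary excellent local domain) and a different conclusion.

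Beyond the mismatch, note the nature of the result you were asked about: it is the deep theorem of Hochster and Huneke (Ann.\ Math.\ \textbf{135} (1992)), which the paper does not prove but cites as an external tool; in the paper it serves as one of the key inputs (together with Theorem \ref{Hoc_94} and Theorem \ref{ahs}) to Smith's short proof of Theorem \ref{tytcl_frm_plus}, namely $(I^n)^* = I^n R^+ \cap R$ for parameter ideals. No argument of the kind you give --- multiplicity and reduction arguments, Brian\c{c}on--Skoda, socle-degree bookkeeping in a graded hypersurface --- can establish it: the theorem concerns arbitrary excellent local domains of characteristic $p$, and its known proofs require a genuinely infinite construction (controlling all module-finite extensions inside $R^+$ and using Frobenius to kill obstructions to the colon-capturing property in the limit). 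If your task was to supply a proof of Theorem \ref{HocHun92bigCM}, the honest options are to reproduce the Hochster--Huneke argument or to cite it, as the paper does; your text does neither.
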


\begin{theorem}{\normalfont\cite[Theorem 5.1]{ksmith}}\label{ksmith}
Let $R$ be a Noetherian locally excellent domain of prime characteristic $p.$ If $I$ is any parameter ideal of $R$ then $IR^+\cap R=I^*.$ 
\end{theorem}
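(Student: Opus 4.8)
The plan is to establish the two inclusions of $IR^+\cap R=I^*$ separately. The containment $IR^+\cap R\subseteq I^*$ holds for \emph{every} ideal and is proved by an elementary trace-and-Frobenius argument; the reverse containment $I^*\subseteq IR^+$ is where the parameter hypothesis and the big Cohen--Macaulay property of $R^+$ (Theorem \ref{HocHun92bigCM}) are indispensable, and I expect its final step to be the main obstacle.

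For $IR^+\cap R\subseteq I^*$, I would take $u\in IR^+\cap R$. Since $R^+$ is the directed union of its module-finite extension domains, there is a single module-finite domain $R\subseteq S\subseteq R^+$ with $u\in IS$. As $S$ is a torsion-free finite $R$-module, there is an $R$-linear map $\phi\colon S\to R$ with $c:=\phi(1)\in R^0$ (take a $\Frac(R)$-basis of $\Frac(S)$ beginning with $1$, project onto the first coordinate, and clear denominators). Writing $u=\sum_j b_js_j$ with $b_j\in I$, $s_j\in S$ and raising to the power $q=p^e$, the cross terms vanish in characteristic $p$, so $u^q=\sum_j b_j^qs_j^q\in I^{[q]}S$; applying $\phi$ and using $u^q\in R$ gives $cu^q=u^q\phi(1)=\sum_j b_j^q\phi(s_j^q)\in I^{[q]}$ for every $q$. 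Hence $u\in I^*$. This step uses neither the parameter hypothesis nor any deep input.

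For the reverse inclusion I would first reduce, using excellence, to the case where $(R,\m)$ is a complete local domain of dimension $d$ and $I=(x_1,\dots,x_d)$ is a full system of parameters: one localizes at a maximal ideal containing $I$, completes, and checks that $I$ remains a parameter ideal and that membership in $I^*$ and in $IR^+$ is both preserved and reflected — these are the standard but technical compatibilities of tight and plus closure with localization and completion. In this setting I would pass to the top local cohomology $H^d_\m(R)=\varinjlim_t R/(x_1^t,\dots,x_d^t)$ and let $\eta=[u;x_1,\dots,x_d]$ be the class of $u$. Because $R^+$ is a balanced big Cohen--Macaulay algebra by Theorem \ref{HocHun92bigCM}, the sequence $x_1,\dots,x_d$ is regular on $R^+$, so the transition maps in $H^d_\m(R^+)=\varinjlim_t R^+/(x_1^t,\dots,x_d^t)$ are injective and $R^+/IR^+$ embeds into $H^d_\m(R^+)$. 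Consequently the image $\eta^+$ of $\eta$ vanishes if and only if $u\in IR^+$, so it suffices to show that $\eta$ lies in the kernel of $H^d_\m(R)\to H^d_\m(R^+)$. On the other hand, unwinding $cu^q\in I^{[q]}$ shows exactly that $c\,\mathcal F^e(\eta)=0$ for all large $e$, where $\mathcal F$ denotes the natural Frobenius action; that is, $\eta$ belongs to the tight closure $0^*$ of the zero submodule of $H^d_\m(R)$.

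The whole problem therefore reduces to the inclusion $0^*_{H^d_\m(R)}\subseteq\ker\bigl(H^d_\m(R)\to H^d_\m(R^+)\bigr)$, and this is the step I expect to be the main obstacle. The mechanism is that $R^+$ is perfect, so $\mathcal F$ acts bijectively on $H^d_\m(R^+)$; applying the inverse Frobenius to $c\,\mathcal F^e(\eta^+)=0$ and using its $p^{-e}$-semilinearity descends the multiplier to its roots, yielding $c^{1/q}u\in IR^+$ for all large $q$. The delicate point is to remove these shrinking multipliers $c^{1/q}$ and conclude $u\in IR^+$ itself: a purely valuative estimate only places $u$ in the integral closure $\overline{IR^+}$, which can be strictly larger (already for a regular sequence generating a non-integrally-closed ideal), so one must genuinely exploit the Frobenius-stability of the defining relations together with the big Cohen--Macaulay structure of $R^+=\varinjlim S$ to kill the class at a finite level of the directed system. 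I would organize this final descent as the heart of the proof, with Theorem \ref{HocHun92bigCM} entering decisively precisely here.
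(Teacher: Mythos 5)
First, a remark on the ground truth: the paper does not prove this statement at all --- it is quoted from \cite[Theorem 5.1]{ksmith} and used as the base case $n=1$ of Theorem \ref{tytcl_frm_plus} --- so your proposal has to stand on its own as a proof of Smith's theorem, and it does not. Your first half is correct and complete: the containment $IR^+\cap R\subseteq I^*$ via a module-finite intermediate domain $S$, an $R$-linear map $\phi\colon S\to R$ with $\phi(1)=c\neq 0$, and additivity of Frobenius is the standard argument and needs no hypothesis on $I$. Your second half assembles the right framework (pass to $H^d_\m(R)$, use Theorem \ref{HocHun92bigCM} so that $x_1,\dots,x_d$ is a regular sequence on $R^+$ and $R^+/IR^+\hookrightarrow H^d_\m(R^+)$, invert the bijective Frobenius on $H^d_\m(R^+)$ to get $c^{1/q}u\in IR^+$ for all $q$), but it then stops exactly where the theorem begins. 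The step you defer as ``the main obstacle'' --- deducing $u\in IR^+$ from $c^{1/q}u\in IR^+$ for all $q$, equivalently $0^*_{H^d_\m(R)}\subseteq\ker(H^d_\m(R)\to H^d_\m(R^+))$ --- is not a removable technicality; it is the entire mathematical content of the result. No formal manipulation with annihilators can close it: once $I$ is a full system of parameters, $\m^N\subseteq I$ for some $N$, so \emph{every} element of $R^+/IR^+$ is killed by an ideal with the same radical as $\m R^+$, and being killed by the ideal $\sum_q c^{1/q}R^+$ (which satisfies $J=J^{[p]}$) only says that $\bar u$ is ``almost zero'', which is strictly weaker than zero. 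What is missing is precisely the Hochster--Huneke equational lemma used by Smith: from the relations $cu^q\in I^{[q]}$ one extracts a single monic additive ($p$-polynomial) equation governing the class $\eta$, and adjoining the roots of that monic equation produces a module-finite extension domain $S\subseteq R^+$ in which $\eta$ dies, i.e.\ $u\in IS$. Naming the place where Theorem \ref{HocHun92bigCM} ``enters decisively'' is not the same as supplying this construction; as written, the hard inclusion is unproved.

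Your reductions also conceal real issues. A parameter ideal need not be a full system of parameters, and localizing at a maximal ideal containing $I$ does not change this ($\hgt I=h$ can remain smaller than $\dim R_\m$); one needs an extra device, e.g.\ expressing $I^*$ and $IR^+$ as intersections over the full parameter ideals $(x_1,\dots,x_h,x_{h+1}^t,\dots,x_d^t)$, the $R^+$-side intersection again requiring the regular-sequence property. More seriously, the completion of an excellent local domain need not be a domain (a nodal plane curve at its node, in odd characteristic, already fails to be analytically irreducible), so ``completes'' really means passing to $\hat R/\p$ for the minimal primes $\p$ of $\hat R$; and plus-closure membership established over $\hat R/\p$ does not descend to $R$ by any ``standard compatibility'', because a module-finite extension of $\hat R/\p$ is not integral over $R$ and does not sit inside $R^+$. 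In summary: you have proved the easy containment, drawn a correct roadmap for the hard one, and left both the destination and the nontrivial parts of the itinerary unreached.
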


The above theorem holds for powers of $I.$ It follows from \cite[Theorem 3.1]{Aber_94}. However, we present a proof which has been communicated to us by Professor Karen Smith. 

\begin{theorem}\label{tytcl_frm_plus}
Let $(R, m)$ be an excellent local domain of prime characteristic $p.$ Let $I= (x_1,\ldots, x_t)$ be any  parameter ideal of  $R.$ Then
$$(I^n)^*=I^nR^+\cap R.$$
\end{theorem}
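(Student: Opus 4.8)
The plan is to compute $(I^n)^*$ by realizing $I^n$ as the expansion of a monomial ideal and then reducing to the case $n=1$ already treated in Theorem \ref{ksmith}. First I would extend $x_1,\ldots,x_t$ to a full system of parameters $x_1,\ldots,x_d$ of $R$ and set $A=\mathbb{F}_p[x_1,\ldots,x_d]\subseteq R$, exactly as in the hypothesis of Theorem \ref{ahs}. Writing $\mathfrak{a}=(x_1,\ldots,x_t)^nA$, a monomial ideal of $A$, we have $\mathfrak{a}R=I^n$ and $\mathfrak{a}R^+=I^nR^+$. The point of passing to $A$ is that $\mathfrak{a}$ admits an irredundant decomposition $\mathfrak{a}=\bigcap_s Q_s$ into irreducible monomial ideals, each of the form $Q_s=(x_1^{a^{(s)}_1},\ldots,x_t^{a^{(s)}_t})$ generated by pure powers of $x_1,\ldots,x_t$. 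Since $x_1,\ldots,x_t$ are parameters, $\hgt(Q_sR)=t$ equals the number of generators of $Q_sR$, so every $Q_sR$ is a parameter ideal of $R$ and Theorem \ref{ksmith} applies to it.

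With these pieces in place I would run the following chain. By the intersection formula of Theorem \ref{ahs}, applied to the monomial ideals $Q_s$ of $A$ and iterated over $s$,
\[
(I^n)^*=(\mathfrak{a}R)^*=\Big(\big(\bigcap_s Q_s\big)R\Big)^*=\bigcap_s (Q_sR)^*.
\]
Then Theorem \ref{ksmith}, applied to each parameter ideal $Q_sR$, gives $(Q_sR)^*=Q_sR^+\cap R$, so that
\[
(I^n)^*=\bigcap_s\big(Q_sR^+\cap R\big)=\Big(\bigcap_s Q_sR^+\Big)\cap R.
\]

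It then remains only to identify $\bigcap_s Q_sR^+$ with $\mathfrak{a}R^+=I^nR^+$, and this is the step I expect to be the main obstacle. The inclusion $I^nR^+\subseteq\bigcap_s Q_sR^+$ is immediate from $\mathfrak{a}\subseteq Q_s$ for each $s$; the content is the reverse inclusion. Here I would invoke Theorem \ref{HocHun92bigCM}: because $R^+$ is a (balanced) big Cohen-Macaulay $R$-algebra, the parameters $x_1,\ldots,x_t$ form a regular sequence on $R^+$, and for a regular sequence the formation of intersections of monomial ideals in those elements commutes with extension to $R^+$. This is precisely the principle underlying Theorem \ref{ahs}, now used inside $R^+$ rather than inside $R$, and it yields $\bigcap_s Q_sR^+=\big(\bigcap_s Q_s\big)R^+=\mathfrak{a}R^+=I^nR^+$. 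Substituting into the previous display gives $(I^n)^*=I^nR^+\cap R$, as wanted.

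Thus the only delicate point is this last regular-sequence identity, which I would isolate as a separate lemma: if $y_1,\ldots,y_t$ is a regular sequence on a ring $B$ and $\mathfrak{a}=\bigcap_s Q_s$ is the irreducible decomposition of a monomial ideal in $y_1,\ldots,y_t$, then $\bigcap_s Q_sB=\mathfrak{a}B$. I expect this to follow by reducing all membership questions to the associated graded description of a regular sequence, where $\operatorname{gr}_{(y_1,\ldots,y_t)}(B)$ is a polynomial ring over $B/(y_1,\ldots,y_t)$, so that the monomial combinatorics governing the decomposition transfer verbatim from $A$ to $B$. Everything else in the argument is a formal concatenation of Theorems \ref{ahs} and \ref{ksmith}, so the whole proof is short once this lemma is in hand.
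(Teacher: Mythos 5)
Your proof is correct, but it takes a genuinely different route from the one in the paper. The paper's argument (due to Karen Smith) inducts on $n$: given $z\in (I^n)^*\subseteq (I^{n-1})^*=I^{n-1}R^+\cap R$, it writes $z=\sum_i a_i x^{\mu_i}$ with $a_i\in R^+$ running over degree-$(n-1)$ monomials, and then uses the \emph{colon} formula of Theorem \ref{ahs} (with $J_i$ the ideal of the remaining degree-$(n-1)$ monomials) to force each coefficient $a_i$ into $IR^+$. You instead avoid induction on $n$ entirely: you decompose $(x_1,\ldots,x_t)^n$ as $\bigcap_s(x_1^{a_1^{(s)}},\ldots,x_t^{a_t^{(s)}})$ over $\sum_i a_i^{(s)}=n+t-1$, $a_i^{(s)}\ge 1$, use the \emph{intersection} formula of Theorem \ref{ahs} to distribute tight closure over the components, apply the $n=1$ case (Theorem \ref{ksmith}) to each parameter ideal $Q_sR$, and reassemble inside $R^+$. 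Each step checks out: the irreducible decomposition of $(x_1,\ldots,x_t)^n$ is the standard one and each $Q_sR$ is indeed a parameter ideal of height $t$; the subring $A$ is genuinely a polynomial ring because a system of parameters is analytically independent and $\m\cap\mathbb{F}_p=0$, so the monomial combinatorics are available. What your approach buys is a uniform reduction to $n=1$ with both inclusions delivered simultaneously by a chain of equalities; what it costs is the extra lemma that $\bigcap_s Q_sR^+=I^nR^+$.

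That last lemma is the one place where you should be careful, and you correctly flag it. It is a classical fact that monomial ideals in a regular sequence behave as in a polynomial ring (intersections and colons commute with the substitution $Y_i\mapsto y_i$); it holds for a regular sequence on an arbitrary, possibly non-Noetherian, ring such as $R^+$, e.g.\ via the exactness of the Taylor complex or via quasi-regularity, $\gr_{(y)}(B)\cong (B/(y))[Y_1,\ldots,Y_t]$. Your sketch through the associated graded ring does work, but to make it complete you must also identify the leading-form ideal of each $Q_sB$ in $\gr_{(y)}(B)$ with the expected monomial ideal $(Y_1^{a_1},\ldots,Y_t^{a_t})$ (this again uses regularity of the sequence), and note that no Krull-intersection issue arises because the whole argument takes place in finitely many fixed degrees. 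With that lemma either proved or cited, the argument is complete and gives a valid alternative proof of the theorem.
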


\begin{proof} ({\bf Karen Smith.})
Use induction on $n.$ The  $n=1$ case  is the  Theorem \ref{ksmith}.
Let $n\geq 2.$ By  Theorems  \ref{Hoc_94} and \ref{HocHun92bigCM}, it is enough to show that$$(I^n)^* \subset I^nR^+\cap R.$$ We assume that the result is true for $n-1.$ Let $z\in (I^n)^*.$
Since $(I^n)^* \subset (I^{n-1})^*,$ by induction hypothesis, $z = \sum_{i=1}^m a_i x^{\mu_i}$ where the coefficients $a_i \in R^+$ and $x^{\mu_i}$ ranges through all degree $n-1$ monomials in $x_1,x_2,\ldots,x_t.$
Let $1\leq i\leq m.$ Then 
$$a_i x^{\mu_i}\in ((x_1,x_2,\ldots,x_t)^n+J_i)^*$$ in $R^+$ where $J_i$ is the ideal generated by all the degree $n-1$ monomials in $x_1,x_2,\ldots,x_t$ except $x^{\mu_i}.$ Since $x^{\mu_i}\notin I^n+J_i,$ and $a_i\in ((x_1,x_2,\ldots,x_t)^n+J_i)^* :x^{\mu_i},$ by Theorem \ref{ahs} it follows that $a_i\in I^*=IR^+.$ Therefore $z\in I^nR^+\cap R.$
\end{proof}
There are several interesting consequences of  Theorem \ref{tytcl_frm_plus}.
These may be proved without using it. However,  Theorem \ref{tytcl_frm_plus},
enables us to provide short  unified proofs.

Huneke \cite[Theorem 4.7]{huneke1987} and Itoh\cite[Theorem 1]{itoh1988} independently proved that in a $d$-dimensional local ring, for an ideal generated by a regular sequence, for all $n\geq 1$ 
\[I^n\cap \overline{I^{n+1}}=I^n\overline{I}.\] The analogue of Huneke-Itoh intersection theorem for tight closure of an ideal generated by regular sequence has been proved in \cite[Theorem 8.20] {HH1994}. However, we prove it in an excellent local Noetherian domain where $I$ is a parameter ideal. This result plays an important role in the study of the tight Hilbert polynomial.

\begin{corollary}\label{extyt}
Let $(R,\m)$ be an excellent local Noetherian domain of prime characteristic.  Let $I$ be an ideal generated  by parameters in $R.$ Then for all $n\geq 0,$
	$$I^n\cap (I^{n+1})^*=I^n I^*.$$ 
\end{corollary}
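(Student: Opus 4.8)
The plan is to prove the two inclusions separately, the containment $I^nI^*\subseteq I^n\cap(I^{n+1})^*$ being the routine one. For that direction I would note first that $I^*I^n\subseteq I^n$, since $I^*\subseteq R$ and $I^n$ is an ideal. For the other membership I would invoke the standard multiplicativity of tight closure, $I^*J^*\subseteq(IJ)^*$, which is valid here because in a domain the product of two nonzero test elements is again a nonzero element of $R^0$; this gives $I^*I^n\subseteq I^*(I^n)^*\subseteq(I^{n+1})^*$. Combining the two yields $I^nI^*\subseteq I^n\cap(I^{n+1})^*$.

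The substance is the reverse inclusion $I^n\cap(I^{n+1})^*\subseteq I^nI^*$, and here I would pass to the absolute integral closure. Write $I=(x_1,\dots,x_t)$ and take $z\in I^n\cap(I^{n+1})^*$. Since $z\in I^n$ I may write $z=\sum_{|\mu|=n}c_\mu x^\mu$ with $c_\mu\in R$, the sum ranging over the degree-$n$ monomials $x^\mu$ in $x_1,\dots,x_t$. On the other hand, Theorem~\ref{tytcl_frm_plus} identifies $(I^{n+1})^*=I^{n+1}R^+\cap R$, so the hypothesis $z\in(I^{n+1})^*$ says precisely that $z\in I^{n+1}R^+$.

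The key input is that, by Theorem~\ref{HocHun92bigCM}, $R^+$ is a (balanced) big Cohen-Macaulay $R$-algebra, so the parameters $x_1,\dots,x_t$ form a regular sequence on $R^+$. For any ring $B$ on which $x_1,\dots,x_t$ is a regular sequence and $I=(x_1,\dots,x_t)$, the associated graded ring $\gr_{IB}(B)=\bigoplus_{m\ge0}I^mB/I^{m+1}B$ is a polynomial ring over $B/IB$ on the classes of $x_1,\dots,x_t$; equivalently, the degree-$n$ monomials $x^\mu$ form a free basis of $I^nB/I^{n+1}B$ over $B/IB$. This is classical and uses no Noetherian hypothesis, so it applies to $B=R^+$. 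Feeding in the relation $z=\sum_{|\mu|=n}c_\mu x^\mu\in I^{n+1}R^+$ then forces every coefficient to vanish modulo $IR^+$, that is $c_\mu\in IR^+$ for each $\mu$. Since also $c_\mu\in R$, the $n=1$ case of Theorem~\ref{tytcl_frm_plus} (equivalently Theorem~\ref{ksmith}) gives $c_\mu\in IR^+\cap R=I^*$. Hence $z=\sum_{|\mu|=n}c_\mu x^\mu\in I^*I^n$, which closes the inclusion.

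I expect the main obstacle to be justifying the freeness step cleanly over the non-Noetherian ring $R^+$: one must be sure the classical identification of $\gr_{IR^+}(R^+)$ with a polynomial ring is quoted in a form that does not assume $R^+$ is Noetherian, and that $x_1,\dots,x_t$, which a priori is only part of a system of parameters of $R$, is genuinely a regular sequence on $R^+$. The latter is where the \emph{balanced} big Cohen-Macaulay property is essential, since one extends $x_1,\dots,x_t$ to a full system of parameters and restricts to the initial segment. Everything else is bookkeeping with monomials and ideal memberships.
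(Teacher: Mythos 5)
Your proposal is correct and follows essentially the same route as the paper: both directions are handled identically, with the reverse inclusion obtained by writing $z=\sum_{|\mu|=n}c_\mu x^\mu$ with $c_\mu\in R$, using Theorem~\ref{tytcl_frm_plus} to place $z$ in $I^{n+1}R^+$, exploiting that $x_1,\dots,x_t$ is a regular sequence on $R^+$ to conclude $c_\mu\in IR^+$, and finishing with $IR^+\cap R=I^*$. Your explicit justification of the coefficient-extraction step via the freeness of $I^nR^+/I^{n+1}R^+$ over $R^+/IR^+$ is a careful elaboration of what the paper asserts in one line, not a different argument.
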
 
\begin{proof}
	Let $I$ be the  ideal generated by $x_1,x_2,\dots,x_t.$
	The inclusion $I^n I^*\subset I^n\cap (I^{n+1})^* $ is clear since $I^*J^*\subset (I^*J^*)^*=(IJ)^*.$ For the reverse inclusion, let $a\in I^n\cap (I^{n+1})^* =I^n \cap I^{n+1}R^+$ where the equality follows from Theorem \ref{tytcl_frm_plus}. 
	We write $$a=\sum_{\vert \alpha \vert =n}y_{\alpha}x^{\alpha}=\sum_{\beta \in \mathbb{N}^d ,\;\; \vert \beta\vert =n+1}z_{\beta}x^{\beta}$$
	for some $y_{\alpha}\in R$ and $z_{\beta}\in R^+.$ As $x_1,x_2,...,x_t$ forms a regular sequence in $R^+$ we have $y_{\alpha}\in IR^+.$ Hence  $y_{\alpha}\in IR^+\cap R=I^*.$ This implies that $a\in I^nI^*.$ 
\end{proof}
The next result is due to K.-i. Watanabe for regular sequences in local rings of prime characteristic \cite[Corollary 4.3]{GMV}. We generalise it for parameter ideals in  excellent local domains of prime characteristic.
\begin{corollary}
Let $(R,\m)$ be a $d$-dimensional excellent local Noetherian domain of prime characteristic.  Let $I$ be an ideal generated by parameters in $R.$ If $I$ is tightly closed then $I^n$ is tightly closed for all $n\geq 1.$
\end{corollary}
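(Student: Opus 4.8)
The plan is to argue by induction on $n$, using Corollary \ref{extyt} as the engine. The base case $n=1$ is precisely the hypothesis that $I$ is tightly closed, i.e. $(I)^*=I$. For the inductive step, I would assume that $I^n$ is tightly closed, so that $(I^n)^*=I^n$, and aim to deduce $(I^{n+1})^*=I^{n+1}$.

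First I would record two elementary inclusions. Since $I^{n+1}\subseteq I^n$ and tight closure is monotone, we have $(I^{n+1})^*\subseteq (I^n)^*$, and the inductive hypothesis turns this into $(I^{n+1})^*\subseteq I^n$. On the other hand, $I^{n+1}\subseteq (I^{n+1})^*$ always holds, so it suffices to prove the reverse containment $(I^{n+1})^*\subseteq I^{n+1}$.

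The key step is to intersect with $I^n$. Because $(I^{n+1})^*\subseteq I^n$, we have the trivial identity $(I^{n+1})^*=(I^{n+1})^*\cap I^n$. Now Corollary \ref{extyt} (applied with the exponent $n$) computes this intersection:
\[
I^n\cap (I^{n+1})^* = I^n I^*.
\]
Invoking the hypothesis $I^*=I$, the right-hand side becomes $I^n I = I^{n+1}$. Combining these gives $(I^{n+1})^*=I^{n+1}$, which closes the induction.

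There is no genuinely hard obstacle once Corollary \ref{extyt} is in hand; the only point requiring care is to verify that $(I^{n+1})^*$ actually sits inside $I^n$ before identifying it with the intersection $(I^{n+1})^*\cap I^n$, since that containment is what allows the intersection formula to collapse the tight closure to $I^{n+1}$. The whole argument is thus a short bootstrapping: the Huneke--Itoh-type intersection identity propagates the tight-closedness of $I$ up through all powers, with the standing hypothesis $I^*=I$ reused at each stage to rewrite $I^nI^*$ as $I^{n+1}$.
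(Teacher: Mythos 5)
Your proposal is correct and follows essentially the same route as the paper: induction on $n$, with the observation that $(I^{n+1})^*\subseteq (I^n)^*=I^n$ allowing Corollary \ref{extyt} to collapse $(I^{n+1})^*=I^n\cap(I^{n+1})^*=I^nI^*=I^{n+1}$. The only cosmetic difference is an index shift (the paper proves the step for $I^n$ from $I^{n-1}$ rather than for $I^{n+1}$ from $I^n$).
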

\begin{proof}
Use induction on $n.$ For $n=1$ the result is true. Suppose that $I,I^2,\ldots,I^{n-1}$ are tightly closed. Let $x\in (I^n)^*.$ Since $(I^n)^*\subset (I^{n-1})^*=I^{n-1},$ using Corollary \ref{extyt} it follows that $x\in (I^n)^*\cap I^{n-1}=I^*I^{n-1}=I^n.$ Hence $I^n$ is also tightly closed. 
\end{proof}

\section{\bf Cohen-Macaulay and Buchsbaum properties of $G^*(I)$}\label{sec: G*(I)}
In this section we discuss the Cohen-Macaulay property of the associated graded ring $G^*(I)$ for an ideal $I$ generated by a system of parameters in an excellent Cohen-Macaulay local domain. We also consider the Buchsbaum property of $G^*(I)$ if $R$ is an excellent Buchsbaum local domain.
\begin{theorem}\label{thm: main}
Let $(R,\m)$ be an excellent  \CM~ local domain of prime  characteristic  and $I=(x_1, x_2, \ldots, x_h)$ be a parameter ideal. Then $\depth G^*(I)\geq h.$ In particular, if $I$ is generated by a system of parameters then $G^*(I)$ is Cohen-Macaulay.
\end{theorem}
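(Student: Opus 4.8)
The plan is to prove the stronger statement that the initial forms $x_1^*,\dots,x_h^*\in [G^*(I)]_1$ of the parameters form a regular sequence on $G^*(I)$; this yields $\depth G^*(I)\ge h$, and when $I$ is a full system of parameters it gives Cohen--Macaulayness, because then $h=\dim R=\dim G^*(I)$ and $\depth\ge\dim$ forces equality. Since $R$ is Cohen--Macaulay and $x_1,\dots,x_h$ are parameters, they already form an $R$-regular sequence, so by the Valabrega--Valla criterion applied to the filtration $\TT=\{(I^n)^*\}$ it suffices to verify the single intersection condition
\begin{equation*}
I\cap (I^{n+1})^*=I\,(I^n)^* \quad\text{for all } n\ge 0. \tag{$\star$}
\end{equation*}
The case $n=0$ is trivial and the case $n=1$ is exactly Corollary \ref{extyt}; the content is to push this to all $n$.

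The conceptual engine is an embedding into a polynomial ring. By Theorem \ref{HocHun92bigCM} the sequence $x_1,\dots,x_h$ is regular on $R^+$, so $\gr_I(R^+)\cong (R^+/IR^+)[X_1,\dots,X_h]$ is a polynomial ring, and Theorem \ref{tytcl_frm_plus} (which gives $(I^n)^*=I^nR^+\cap R$) shows that the natural degreewise map $\phi\colon G^*(I)\to \gr_I(R^+)$, $x_i^*\mapsto X_i$, is a well-defined injective graded ring homomorphism: if $a\in (I^n)^*$ maps to $0$ then $a\in I^{n+1}R^+\cap R=(I^{n+1})^*$. Because each $X_i$ is a nonzerodivisor on the polynomial ring and $\phi$ is an injective ring map, each $x_i^*$ is already a nonzerodivisor on $G^*(I)$; moreover $\gr_I(R)\cong(R/I)[X_1,\dots,X_h]$ (the parameters being $R$-regular) sits inside $G^*(I)$, so $G^*(I)$ is squeezed between two polynomial rings on the same variables. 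This picture makes $(\star)$ believable and reduces the whole question to how $G^*(I)$ meets the ideal $(X_1,\dots,X_h)$.

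I would then prove $(\star)$ by induction on $n$. The inclusion $\supseteq$ is immediate. For $\subseteq$, take $a\in I\cap(I^{n+1})^*$; since $(I^{n+1})^*\subset (I^n)^*$, the inductive hypothesis writes $a=\sum_i x_i b_i$ with $b_i\in (I^{n-1})^*$, while $a\in I^{n+1}R^+$ gives $a=\sum_i x_i\gamma_i$ with $\gamma_i\in I^nR^+$. The difference $(b_i-\gamma_i)_i$ is then a syzygy of the regular sequence $x_1,\dots,x_h$ on $R^+$, and since $\gr_I(R^+)$ is a polynomial ring this syzygy is a Koszul combination that respects the $I$-adic filtration; this lets me replace the $b_i$ by elements of $I^nR^+$ modulo an antisymmetric Koszul term that cancels in $\sum_i x_i b_i$.

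The main obstacle is precisely this last step: the Koszul adjustments produced in $R^+$ carry coefficients in $R^+$, whereas $(\star)$ demands coefficients in $(I^n)^*=I^nR^+\cap R\subset R$. To descend the coefficients back to $R$ I would invoke the monomial colon-capturing Theorem \ref{ahs}: taking $A=\Z/p\Z[x_1,\dots,x_d]$ with $x_1,\dots,x_d$ a full system of parameters extending $x_1,\dots,x_h$, each $I^m=(x_1,\dots,x_h)^mR$ is extended from a monomial ideal of $A$, so Theorem \ref{ahs} computes the relevant colons and intersections of the tightly closed ideals $(I^m)^*$ \emph{inside} $R$ and converts the $R^+$-coefficient relations into honest relations with coefficients in $(I^n)^*$. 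This descent is the technical heart of the argument. Once $(\star)$ holds for all $n$, the Valabrega--Valla criterion gives that $x_1^*,\dots,x_h^*$ is a regular sequence on $G^*(I)$, whence $\depth G^*(I)\ge h$, and the Cohen--Macaulay conclusion for a full system of parameters follows as noted above.
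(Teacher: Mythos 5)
Your skeleton is the same as the paper's: reduce via Valabrega--Valla to $I\cap(I^{n+1})^*=I(I^n)^*$ for all $n$, and prove this by induction using $(I^m)^*=I^mR^+\cap R$ (Theorem \ref{tytcl_frm_plus}) and the regularity of $x_1,\dots,x_h$ on $R^+$ (Theorem \ref{HocHun92bigCM}). Two remarks on the framing: the injection $G^*(I)\hookrightarrow\gr_{IR^+}(R^+)$ and the consequence that each single $x_i^*$ is a nonzerodivisor are correct, but $\gr_I(R)$ does \emph{not} sit inside $G^*(I)$ --- the natural map $\gr_I(R)\to G^*(I)$ already has kernel $I^*/I$ in degree $0$ --- so the ``squeezed between two polynomial rings'' picture is inaccurate (harmless, since you only use it as motivation).

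The genuine gap is the one you flag yourself, and it is not closed by what you write. Comparing $a=\sum_i x_ib_i$ with $b_i\in(I^{n-1})^*\subset R$ against $a=\sum_i x_i\gamma_i$ with $\gamma_i\in I^nR^+$ gives a Koszul relation $b_i-\gamma_i=\sum_j c_{ij}x_j$ with $c_{ij}=-c_{ji}\in R^+$; the ``adjusted'' coefficients $b_i-\sum_jc_{ij}x_j$ are literally the $\gamma_i$, which lie in $I^nR^+$ but not in $R$, while membership in $(I^n)^*=I^nR^+\cap R$ requires both. What is needed is an antisymmetric matrix $(c_{ij})$ with $\sum_j c_{ij}x_j\in R$ and $b_i-\sum_jc_{ij}x_j\in I^nR^+$ for all $i$ simultaneously, and nothing in the proposal produces it. Theorem \ref{ahs} cannot simply be ``invoked'' here: it computes colons and intersections of tight closures of monomial ideals entirely inside $R$, whereas your obstruction is a syzygy with entries in $R^+$, and you never state which colon or intersection identity is supposed to yield the descent. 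So as written the ``technical heart'' is a placeholder, not an argument. For comparison, the paper arranges the induction so that the descent never arises: it takes the coefficients $a_i$ of $z=\sum_i a_ix_i$ from the inductive identity $z\in I\cap(I^n)^*=I(I^{n-1})^*$, so that the $a_i\in(I^{n-1})^*$ are elements of $R$ from the outset, and then argues only that $z\in I^{n+1}R^+$ forces each $a_i\in I^nR^+$; intersecting with $R$ is then automatic. If you restructure your inductive step this way, the remaining point to justify is the single membership $a_i\in I^nR^+$, where the non-uniqueness of the monomial expansion of $a_i$ over $R^+$ (i.e.\ the same Koszul syzygies) still has to be handled --- but now entirely inside $R^+$, not across the extension $R\subset R^+$.
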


\begin{proof}
By Valabrega-Valla  \cite{vv} it is enough to show that for all $n\geq0,$ the $VV_n$ property
\[I\cap (I^{n+1})^*=I(I^n)^*\]
is satisfied. 
We prove $VV_n$ by  induction on $n.$ The statement is clearly true for $n=0.$ Suppose that $n\geq 1$ and 
${VV}_{n-1}$ is true. We prove $VV_n.$ Let 
$$z=\sum_{i=1}^ha_ix_i\in (I^{n+1})^*=I^{n+1}R^+\cap R.$$ We show that $a_i\in (I^n)^*$ for all $i=1, 2, \ldots, h.$  Since  $z\in I \cap (I^{n})^*,$ by induction hypothesis, $a_i\in (I^{n-1})^*\subset I^{n-1}R^+$ for all $i=1, 2, \ldots, h.$ We can write $$a_i=
\sum_{\alpha \in \NN^{h}, \;\; |\alpha|=n-1}a_{i\alpha}x^{\alpha} \text{  for some }a_{i\alpha}\in R^+.$$
 Since
$z=\sum_{i=1}^h a_ix_i \in (I^{n+1})R^+$ and $x_1, x_2, \ldots, x_h$ is an $R^+$-regular sequence, $a_{i\alpha}\in IR^+$ for all $i$ and all $\alpha.$ Hence $a_i\in (I^n)R^+\cap R=(I^n)^*$ for all $i=1, 2, \ldots, h.$ This proves $VV_n$ for all $n.$ Hence $\depth G^*(I)\geq h.$ In particular if $h=d$ then $G^*(I)$ is Cohen-Macaulay.

\end{proof}

\begin{theorem}
Let $(R,\fm)$ be an excellent local Buchsbaum domain of prime characteristic $p.$ Let $I=(a_1,a_2,\cdots, a_d)$ be an $\fm$-primary parameter ideal. Then $G^*(I)$ is Buchsbaum as an $\mathcal{R}^*(I)$-module.
\end{theorem}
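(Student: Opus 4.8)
The plan is to run the argument of Theorem~\ref{thm: main} again, but with ``regular sequence'' systematically weakened to ``weak sequence'', since the hypothesis on $R$ has been relaxed from Cohen--Macaulay to Buchsbaum. Put $G:=G^*(I)$, let $\M$ be the graded maximal ideal of $\R^*(I)$, and let $a_1^*,\dots,a_d^*\in I^*/(I^2)^*$ be the degree-one initial forms of the generators $a_1,\dots,a_d$; these form a homogeneous system of parameters for the $d$-dimensional $\R^*(I)$-module $G$. By the graded Buchsbaum criterion (the Buchsbaum analogue of Valabrega--Valla; cf. St\"uckrad--Vogel), it suffices to produce, for each $i$, the weak-sequence colon identity
$$(a_1^*,\dots,a_{i-1}^*)G:_G a_i^*=(a_1^*,\dots,a_{i-1}^*)G:_G\M,$$
and then to check that the resulting defect is independent of the chosen parameters, i.e. that the canonical maps $\Ext^i_{\R^*(I)}(\R^*(I)/\M,G)\to H^i_{\M}(G)$ are surjective for $i<d$. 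The structural engine, exactly as before, is that $a_1,\dots,a_d$ remains a regular sequence on the balanced big Cohen--Macaulay algebra $R^+$ (Theorem~\ref{HocHun92bigCM}), together with the identity $(I^n)^*=I^nR^+\cap R$ (Theorem~\ref{tytcl_frm_plus}).

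The first step is to re-examine the colon computation inside $R^+$ from the proof of Theorem~\ref{thm: main}, where Cohen--Macaulayness of $R$ entered only at the very end, to license Valabrega--Valla, and played no role in the core manipulation. Given a homogeneous relation $z=\sum_i a_i c_i\in(I^{n+1})^*=I^{n+1}R^+\cap R$, one expands each $c_i$ using the inductive hypothesis and pushes the coefficients into $IR^+\cap R=I^*$ using only $R^+$-regularity of $a_1,\dots,a_d$. The same computation here shows that, modulo $(a_1,\dots,a_{i-1})(I^n)^*$, the colon $(a_1,\dots,a_{i-1})(I^n)^*:a_i$ is captured inside the tight closure, and by colon capturing (Theorem~\ref{col_cap1}) it is pinned down precisely by the failure of $a_1,\dots,a_d$ to be an honest $R$-regular sequence. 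That failure is measured by the lower local cohomology modules $H^i_{\fm}(R)$, $i<d$, so the defect of $a_1^*,\dots,a_d^*$ being a genuine $G$-regular sequence should be transported entirely from these modules.

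Because $R$ is Buchsbaum, each $H^i_{\fm}(R)$ with $i<d$ is a finite-dimensional $R/\fm$-vector space, whence $\fm H^i_{\fm}(R)=0$ and, crucially, the maps $\Ext^i_R(R/\fm,R)\to H^i_{\fm}(R)$ are already surjective. I would feed this into the two standard graded short exact sequences of $\R^*(I)$-modules, where $\R^*(I)_+=\bigoplus_{n\ge1}(I^n)^*t^n$,
$$0\to \R^*(I)_+\to \R^*(I)\to R\to 0,\qquad 0\to \R^*(I)_+(1)\xrightarrow{\,\cdot t^{-1}\,}\R^*(I)\to G\to 0,$$
in order to compute $H^\bullet_{\M}(G)$; the expected output is an identification of $H^i_{\M}(G)$ for $i<d$ with a graded sum of copies of $H^i_{\fm}(R)$, from which $\M H^i_{\M}(G)=0$ and the Ext-surjectivity for $G$ both follow, giving the Buchsbaum property. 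I expect the genuine obstacle to lie exactly in this last bookkeeping: establishing the \emph{full} Buchsbaum property (the surjectivity, equivalently the independence of the weak-sequence defect from the system of parameters) rather than merely the quasi-Buchsbaum statement $\M H^i_{\M}(G)=0$, and confirming that the tight-closure corrections $(I^{n+1})^*\supsetneq I(I^n)^*$ contribute additively to each graded strand rather than creating new socle. Checking this compatibility degree by degree---using that $R^+$ is balanced big Cohen--Macaulay, so that the underlying regular-sequence computation is insensitive to the internal grading---is where the real work resides.
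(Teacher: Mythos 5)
There is a genuine gap: your argument is a plan that stops exactly where the proof has to happen. You correctly reduce the problem to (i) weak-sequence colon identities for the initial forms $a_1^*,\dots,a_d^*$ in $G^*(I)$ and (ii) surjectivity of the natural maps $\Ext^i_{\mathcal R^*(I)}(\mathcal R^*(I)/\M,G^*(I))\to H^i_{\M}(G^*(I))$ for $i<d$, which is indeed the St\"uckrad--Vogel characterization of Buchsbaumness. But for (ii) you only assert an ``expected'' identification of $H^i_{\M}(G^*(I))$ with graded sums of copies of $H^i_{\fm}(R)$, to be extracted from the two short exact sequences relating $\mathcal R^*(I)$, $\mathcal R^*(I)_+$, $R$ and $G^*(I)$. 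That identification is precisely the hard content of the theorem and does not come for free: the connecting homomorphisms in the resulting long exact sequences of local cohomology need not vanish or split, and controlling them requires exactly the ideal-theoretic information about the filtration $\{(I^n)^*\}$ that you have not established. You acknowledge this yourself (``where the real work resides''), so as written the proposal proves at most a quasi-Buchsbaum-type statement conditionally, not the theorem. A further soft spot: your colon computation in $R^+$ is described only ``modulo $(a_1,\dots,a_{i-1})(I^n)^*$'' and is never pinned down to an actual identity of ideals of $R$.

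The paper avoids all of this local cohomology bookkeeping by invoking a ready-made criterion of Saloni \cite[Theorem 1.2]{sk}: $G^*(I)$ is Buchsbaum as an $\mathcal R^*(I)$-module provided $(a_1^2,\dots,a_d^2)\cap (I^n)^*=(a_1^2,\dots,a_d^2)(I^{n-2})^*$ for all $n\ge 3$. This is a Valabrega--Valla-type identity, but for the \emph{squares} of the parameters, and it is verified by exactly the $R^+$-computation you describe: writing an element of $(I^n)^*=I^nR^+\cap R$ as $\sum_i b_ia_i^2$, using that $a_1,\dots,a_d$ is a regular sequence on $R^+$ to conclude $b_i\in I^{n-2}R^+\cap R=(I^{n-2})^*$ by induction. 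So your structural engine (Theorems~\ref{HocHun92bigCM} and \ref{tytcl_frm_plus}) is the right one, but it must be fed into the square-generator intersection criterion rather than into a from-scratch Ext/local-cohomology argument; without that criterion (or a complete substitute for step (ii)), the proof is not closed.
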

\begin{proof}
Using  \cite[Theorem 1.2]{sk}, it is enough to prove that for all $n\ge 3,$
\begin{align*}
(a_1^2, \dots, a_d^2)\cap (I^n)^*=(a_1^2, \dots, a_d^2)(I^{n-2})^*.
\end{align*}
Consider the following statement:
\begin{equation} \label{eq:Bn}
\text{Let } z= \sum_{i=1}^d b_ia_i^2\in (I^{n})^* \text{ for some }  b_i\in R \text{ then } b_i\in (I^{n-2})^*.
\end{equation}
We  prove \eqref{eq:Bn} for $n\ge 3$ using induction on $n.$ Let $n=3$ and $z=\sum_{i=1}^d b_ia_i^2 \in (I^3)^*\subset I^3R^+.$ Let $F(X_1,\dots, X_d)= \sum_{i=1}^d b_iX_i^2,$ then $F(a_1,\dots, a_d) \in I^3R^+.$ Since, $a_1,\dots, a_d$ is an $R^+$-regular sequence, it follows that $b_1, \dots, b_d \in I R^+.$ Hence $b_1,\ldots,b_d\in I^*$ which proves \eqref{eq:Bn} for $n=3.$
\noindent Now, we prove that \eqref{eq:Bn} holds for $n+1$ assuming that \eqref{eq:Bn} is true for $n.$
Let $ z= \sum_{i=1}^d b_ia_i^2\in (I^{n+1})^*\subset (I^n)^*.$ By induction hypothesis it follows that $b_i \in (I^{n-2})^* \subset I^{n-2}R^+.$ Therefore, 
\begin{align*}
b_i=\sum_{|\alpha|=n-2} c_\alpha a^\alpha, \text{ where }a^\alpha=a_1^{\alpha_1}\cdots a_d^{\alpha_d}.
\end{align*}
Note that 
$$z= \sum_{i=1}^d \left( \sum_{|\alpha|=n-2} c_\alpha a^\alpha \right)a_i^2 =  \sum_{i=1}^d \sum_{|\alpha|=n-2} c_\alpha a^{\alpha+2e_i}\in I^{n+1}R^+,$$
where $e_i=(0,0,\cdots, 1,0,\cdots,0)$ with $1$ as the $i^{th}$  coordinate. Therefore  $c_\alpha\in IR^+$ and hence 
$b_i\in I^{n-1}R^+\cap R=(I^{n-1})^*.$ 
\end{proof}

 We have computed the tight closure of $I^n,$ where $I=(x_1,\ldots,x_d)$ in a local ring of the type $R = \mathbb{F}_p[[X_1, X_2, \ldots , X_n]]/(f),$ where $f$ is an irreducible polynomial in \cite{dmv} and \cite{GMV2019}. In several cases $(I^n)^* = I^n + \m^{n+r}$ for all $n \geq 1$ and for a fixed integer $r.$ Here $\m$ denotes the maximal homogeneous ideal of $R.$  In this section we prove a stronger version where we consider $I$ to be ideal generated by homogeneous system of parameters having the same degree. Using these formulas, we are able to conclude that $G^*(I)$ is {\CM}. The connection between the back twists in a minimal resolution and the socle degree of an artinian ring plays a crucial role in the proof. 
The observation given below is the content of \cite[Lemma 1.3]{KU92}.
\begin{lemma}\label{Soc_twist_obs}
Let $S=k[X_1,\ldots,X_d]$ be a positively graded polynomial over a field $k$ where $\deg(X_i)=e$ for $1\leq i \leq d,$ $S/J$ be an Artinian graded ring and $\mathbb{F}$ be a minimal graded resolution of $S/J$ by free $S$-modules given by
\begin{align*}
\mathbb{F}: 0 \longrightarrow \mathbb{F}_d=\oplus_{i=1}^rS(-b_i) \longrightarrow \mathbb{F}_{d-1}\longrightarrow \cdots \rightarrow \mathbb{F}_0 \longrightarrow 0.
\end{align*}
Then there exists a graded isomorphism 
$$\Soc(S/J)\cong \oplus_{i=1}^rk(-b_i+de).$$ 

\end{lemma}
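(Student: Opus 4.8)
The plan is to deduce the socle degrees from the top of the resolution by passing through the graded canonical module and graded local duality. First I would record the two structural facts that drive everything. Since each $X_i$ has degree $e$ and there are $d$ of them, the graded canonical module of the polynomial ring is $\omega_S=S(-de)$. Since $S/J$ is Artinian we have $\depth(S/J)=0$, so by the Auslander--Buchsbaum formula $\pd_S(S/J)=d$, which is exactly why $\mathbb{F}$ has length $d$ with top term $\mathbb{F}_d=\oplus_{i=1}^r S(-b_i)$. The strategy is then to identify $\Soc(S/J)$ with the graded Matlis dual of the minimal generators of $\omega_{S/J}$, and to read those generators directly off $\mathbb{F}_d$.

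Next I would invoke graded local duality. Because $S/J$ is Artinian we have $H^0_{\m}(S/J)=S/J$ and $H^i_{\m}(S/J)=0$ for $i>0$, where $\m=(X_1,\ldots,X_d)$, so local duality collapses to a single isomorphism $\omega_{S/J}:=\Ext^d_S(S/J,\omega_S)\cong (S/J)^\vee$, the graded Matlis dual. To compute this $\Ext$ together with its grading, I would apply $\Hom_S(-,\omega_S)$ to $\mathbb{F}$. Minimality of $\mathbb{F}$ means every differential has entries in $\m$, so after dualizing the maps still land in $\m$; consequently the minimal generators of the cokernel $\Ext^d_S(S/J,\omega_S)$ are exactly $\Hom_S(\mathbb{F}_d,\omega_S)\otimes_S k$. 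Using $\Hom_S\!\big(S(-b_i),S(-de)\big)\cong S(b_i-de)$ this yields $\omega_{S/J}\otimes_S k\cong \oplus_{i=1}^r k(b_i-de)$.

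The final step is the standard socle--generator duality. For the Artinian module $M=S/J$ one has, via the adjunction $\Hom_S(k,M)=\Hom_S\!\big(k,\Hom_S(M^\vee,E)\big)\cong (k\otimes_S M^\vee)^\vee$ with $E$ the graded injective hull of $k$, the identification $(\Soc M)^\vee\cong M^\vee\otimes_S k=M^\vee/\m M^\vee$. Combining this with the previous paragraph gives $(\Soc(S/J))^\vee\cong \omega_{S/J}\otimes_S k\cong \oplus_{i=1}^r k(b_i-de)$. Dualizing once more and using the shift rule $k(a)^\vee\cong k(-a)$ produces $\Soc(S/J)\cong \oplus_{i=1}^r k(-b_i+de)$, which is precisely the asserted isomorphism.

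The conceptual skeleton here is entirely standard, so the main obstacle is purely bookkeeping: one must track the grading shift correctly through two applications of Matlis duality and one $\Hom$, making sure that $\omega_S=S(-de)$, that $\Hom_S(S(-b_i),\omega_S)$ contributes the twist $b_i-de$, and that Matlis duality reverses the sign of each twist. A fully self-contained variant would bypass local duality by resolving $k$ with the Koszul complex on $X_1,\ldots,X_d$, whose top cohomology gives $\Ext^d_S(k,S)\cong k(de)$, and then running a change-of-rings argument; however, that route carries heavier degree bookkeeping, so I would favor the canonical-module formulation above.
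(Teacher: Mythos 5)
Your argument is correct, but it reaches the conclusion by a genuinely different route from the paper. The paper computes the single graded vector space $\Tor_d^S(S/J,k)$ twice: once from the minimal resolution $\mathbb{F}$ tensored with $k$, which by minimality gives $\oplus_{i=1}^r k(-b_i)$, and once from the Koszul complex on $X_1,\ldots,X_d$ tensored with $S/J$, whose top homology is visibly $\ker\bigl((S/J)(-de)\to (S/J)^d(-(d-1)e)\bigr)=\bigl((J:\m)/J\bigr)(-de)=\Soc(S/J)(-de)$; equating the two answers gives the lemma. You instead dualize: you pass to $\Ext^d_S(S/J,\omega_S)\cong (S/J)^\vee$ via graded local duality, read the minimal generators of this canonical module off $\Hom_S(\mathbb{F}_d,\omega_S)$, and convert generators of $(S/J)^\vee$ back into socle elements by Matlis duality. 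Your degree bookkeeping ($\omega_S=S(-de)$, $\Hom_S(S(-b_i),S(-de))\cong S(b_i-de)$, $k(a)^\vee\cong k(-a)$) checks out and lands on the same answer. The trade-off: the paper's Tor-balancing argument is more elementary, needing only the symmetry of Tor and the observation that the top Koszul homology of an Artinian module is its socle shifted by $-de$, whereas your route invokes local duality and two Matlis dualizations but makes the conceptual content transparent (socle degrees of $S/J$ are the negatives of the generator degrees of $\omega_{S/J}$, which sit at the back of the resolution). One point you leave implicit: to regard $\Hom_S(\mathbb{F}_\bullet,\omega_S)$ as a reversed minimal free resolution of $\omega_{S/J}$ you need $\Ext^i_S(S/J,S)=0$ for $i<d$, which holds because $S/J$ is Artinian, hence perfect of grade $d$; for identifying the minimal generators of the cokernel alone, however, minimality of the last dualized map already suffices, so this is a presentational rather than a substantive gap.
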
 
\begin{proof} Consider the graded $S$-module $\Tor_d^S(S/J,k).$ The $d^{th}$ homology of the complex obtained by tensoring $\mathbb{F}$ with $k$ yields $\Tor_d^S(S/J,k)=H_d(\mathbb{F}\otimes k)=\oplus_{i=1}^rk(-b_i).$ Another way to compute $\Tor_d^S(S/J,k)$ is via the Koszul compex on $X_1,\ldots ,X_d,$ $\mathbb{K}$ say.
\begin{align*}
\mathbb{K}: 0 \longrightarrow S(-de) \xrightarrow{\begin{bmatrix}
    X_{1} & X_{2} & \cdots & X_{d}  
 \end{bmatrix}} S^d(-(d-1)e)\longrightarrow \cdots \rightarrow S^d(-e) \longrightarrow S \longrightarrow S/\m,
\end{align*}
Tensoring by $S/J$ we obtain 
\begin{align*}
\mathbb{K}: 0 \longrightarrow \ker \phi \longrightarrow (S/J)(-de) \xrightarrow{\phi} (S/J)^d(-(d-1)e)\longrightarrow \cdots \rightarrow (S/J)^d(-e) \longrightarrow (S/J) \longrightarrow S/\m,
\end{align*}
where $\phi$ is the multipication map by  $\begin{bmatrix}
    X_{1} & X_{2} & \cdots & X_{d}  
 \end{bmatrix}.$
 Since $\ker \phi=\frac{J:(X_1,\ldots ,X_d)}{J}=\Soc(S/J),$ it follows that 
 $$\Tor^S_d(S/J,k)=H_d(\mathbb{K}\otimes S/J)=\Soc(S/J)(-de).$$
Hence $\Soc(S/J)\cong \oplus_{i=1}^rk(-b_i+de).$
\end{proof}

\begin{lemma} \label{Soc_twist_lemma}
Let $R$ be a finitely generated $k$-algebra which is Cohen-Macaulay graded ring with $\dim(R)=d$ over a field $k.$ Let $f_1,\ldots,f_d$ be homogeneous system of parameters of $R$ such that $\deg(f_i)=e$ for all $1\leq i \leq d$ and $I=(f_1,\ldots ,f_d).$ Then there exists a one-one correspondence between the socle degrees of $S/\m^n$ and the last twist in a projective resolution of $R/I^n,$ where $S=k[f_1,\ldots ,f_d]$ is a polynomial ring and $\m=(f_1,\ldots,f_d)S.$
\end{lemma}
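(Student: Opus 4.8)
The plan is to exploit the fact that, since $R$ is a Cohen--Macaulay graded $k$-algebra of dimension $d$ and $f_1,\ldots,f_d$ is a homogeneous system of parameters, the inclusion $S=k[f_1,\ldots,f_d]\hookrightarrow R$ is a graded Noether normalization under which $R$ is a finitely generated free graded $S$-module. First I would note that the $f_i$ are algebraically independent, so $S$ really is a polynomial ring, and then invoke the graded form of ``maximal Cohen--Macaulay over a regular graded ring is free'': since $\depth_S R = d = \dim S$ and $S$ is a graded polynomial ring, $R\cong\bigoplus_{j=1}^{m} S(-a_j)$ for suitable shifts $a_j$, where $m=\ell(R/I)$.

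Next I would pass to the quotient. Because $f_1,\ldots,f_d$ generate $I$ in $R$ while $\m=(f_1,\ldots,f_d)S$, we have $I^n=\m^n R$ and hence $R/I^n\cong R\otimes_S(S/\m^n)\cong\bigoplus_{j=1}^{m}(S/\m^n)(-a_j)$ as graded $S$-modules. Since $R/I^n$ is Artinian, as an $S$-module it has depth $0$, so by Auslander--Buchsbaum over the regular ring $S$ its projective dimension is exactly $d$; in particular its minimal graded free $S$-resolution has length $d$, and this is the resolution intended in the statement (a resolution over $R$ itself would in general be infinite, as $R$ need not be regular).

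The core step is to build that resolution from the one for $S/\m^n$. Let $\mathbb{F}\colon 0\to\mathbb{F}_d\to\cdots\to\mathbb{F}_0\to S/\m^n\to 0$ be the minimal graded free $S$-resolution, with $\mathbb{F}_d=\bigoplus_i S(-b_i)$. Because $R$ is $S$-free, $\mathbb{F}\otimes_S R$ is exact and resolves $R/I^n$, and since the differentials of $\mathbb{F}$ have entries in $\m$ this tensored complex is again minimal; hence its last free module is $\mathbb{F}_d\otimes_S R=\bigoplus_{i,j}S(-b_i-a_j)$, so the last twists of $R/I^n$ are exactly the numbers $b_i+a_j$. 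On the other hand Lemma \ref{Soc_twist_obs} gives $\Soc(S/\m^n)\cong\bigoplus_i k(-b_i+de)$, so the socle degrees of $S/\m^n$ are precisely the $b_i-de$. Matching the two computations yields the asserted one-one correspondence: each socle degree $b_i-de$ of $S/\m^n$ produces the family of last twists $\{\,b_i+a_j : 1\le j\le m\,\}$ of $R/I^n$, i.e. the last twists of $R/I^n$ are obtained from the socle degrees of $S/\m^n$ by adding $de$ together with one of the basis degrees $a_j$ of $R$ over $S$.

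I expect the main obstacle to be the bookkeeping that pins the correspondence down exactly: verifying that $\mathbb{F}\otimes_S R$ remains minimal, so that no twist $b_i+a_j$ is lost to cancellation, and keeping careful track of both the shift $de$ coming from Lemma \ref{Soc_twist_obs} and the basis shifts $a_j$ of $R$ over $S$. The freeness of $R$ over $S$ is the other load-bearing point; once it is in hand, everything else reduces to flatness of a free module and standard graded minimal-resolution bookkeeping.
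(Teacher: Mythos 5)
Your setup is exactly the paper's: you use Auslander--Buchsbaum (or the graded ``MCM over a regular ring is free'' fact) to see that $R$ is a free $S$-module, and you tensor the minimal graded $S$-free resolution $\mathbb{F}$ of $S/\m^n$ with $R$ to resolve $R/I^n\cong S/\m^n\otimes_S R$. But you then go astray at the decisive point. You assert that ``a resolution over $R$ itself would in general be infinite, as $R$ need not be regular,'' and on that basis you resolve $R/I^n$ over $S$ instead, obtaining last twists $b_i+a_j$. That assertion is false for this particular module: the complex $\mathbb{F}\otimes_S R$ is already a complex of \emph{free $R$-modules} $\mathbb{F}_i\otimes_S R=\oplus R(-b_{i_j})$, it is exact because $R$ is $S$-flat, and it is minimal over $R$ because the entries of the differentials lie in $\m S$ and hence in $IR\subset\m_R$. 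So $R/I^n$ has a finite minimal free resolution over $R$ of length $d$ whose twists are \emph{exactly} the $b_{d_i}$ of the $S$-resolution of $S/\m^n$ --- this is the resolution the lemma refers to, and it is the one needed downstream (the Huneke--Smith theorem invoked in Proposition \ref{tytclhomsop} requires finite projective dimension over the hypersurface $R$ and reads off $(J^n)^*$ from the back twists of the $R$-resolution).

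The consequence of your substitution is not merely cosmetic: with the $S$-resolution the last twists are the $rm$ numbers $b_i+a_j$ (where $m=\rank_S R$), which are not in one-one correspondence with the $r$ socle degrees of $S/\m^n$; your own phrasing (``each socle degree produces the family $\{b_i+a_j\}$'') concedes this. So as written you have proved a correct but different statement, and the bijection claimed in the lemma --- socle degree $b_{d_i}-de$ of $S/\m^n$ corresponds to last twist $b_{d_i}$ of the $R$-free resolution of $R/I^n$ --- does not follow from your version. The repair is immediate: stop at $\mathbb{F}\otimes_S R$ viewed over $R$, note the twists are unchanged, and apply Lemma \ref{Soc_twist_obs} to $S/\m^n$, exactly as the paper does. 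Your observations about minimality being preserved and about freeness being the load-bearing hypothesis are correct and are indeed the two points the paper's proof rests on.
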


\begin{proof}
Since  $f_1,\ldots ,f_d$ is a homogeneous system of parameters, $R$ is a finite $S$-module. By Auslander-Buchsbaum formula it follows that $\pd _SR+\depth_SR=d.$ Since $\depth_SR=d,$ $R$ is a free $S$-module. Let a graded free resolution of $S/\m^n$ be given by
\begin{align*}
 0 \longrightarrow \oplus_i S(-b_{d_i}) \longrightarrow \cdots \longrightarrow \oplus_iS(-b_{1_i}) \longrightarrow S \longrightarrow S/\m^n \longrightarrow 0.
\end{align*}
Since $R$ is a flat $S$-module, tensoring the above gives free resolution for $R/I^n$ as $S/\m^n\otimes R\cong R/I^n,$ where the twists remain unchanged as the maps are induced. 
\begin{align*}
 0 \longrightarrow \oplus_i R(-b_{d_i}) \longrightarrow \cdots \longrightarrow \oplus_iR(-b_{1_i}) \longrightarrow R \longrightarrow R/I^n \longrightarrow 0.
\end{align*}
Using Lemma  \ref{Soc_twist_obs} we conclude that $$\Soc(S/\m^n)\cong \oplus_ik(-b_{d_i}+de)$$
which implies the result by comparing degrees on either side of the graded isomorphism.
\end{proof}
Next result gives a formula for $(I^n)^*$ when $I$ is  the ideal generated by homogeneous system of parameters of the same degree. 

\begin{proposition} \label{tytclhomsop} Let $R=\frac{\mathbb{F}_p[X_0,X_1,\ldots,X_{d}]}{(X_0^r+X_1^r+\cdots+X_d^r)},$ where $p\nmid r; r,d\geq2$ and $p>(d-1)r-d.$ Let $f_1,\ldots,f_d$ be homogeneous system of parameters in $R$ such that $\deg (f_i)=e$ for $1\leq i \leq d,$ $J=(f_1,\ldots,f_d)$ and $\fm =(x_0,x_1,\ldots ,x_d).$ Then\\
{\rm (a)} $(J^n)^*=J^n+\fm^{(n-1)e+de},$\\
{\rm (b)} $G^*(J)$ is Cohen Macaulay,\\
{\rm (c)} $r_J(\fm^e)=\left[ \frac{r-1-d}{e}\right]+d,$ $r^*(J)\le \left[ \frac{r-1-d}{e}\right]+1.$\\
{\rm (d)} If $\left[\frac{r-1-d}{e}\right] \leq d-2$ then $\mathcal R^*(J)$ is Cohen-Macaulay.
\end{proposition}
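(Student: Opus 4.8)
The plan is to establish (a) by a two-sided degree argument and then read off (b)--(d) from (a) together with the results of Section~\ref{sec: G*(I)}. Write $a=r-1-d$; this is the $a$-invariant of $R$, since $R$ is a hypersurface of degree $r$ in $d+1$ variables, so $a(R)=-(d+1)+r$. Because $p\nmid r$ the Fermat polynomial has an isolated singularity, so $R$ is a normal excellent \CM{} domain, and $R$ is a finite free module over $S=\mathbb{F}_p[f_1,\dots,f_d]$. For each $q=p^{\ell}$ the elements $f_1^q,\dots,f_d^q$ are a homogeneous system of parameters of degree $qe$, and Lemma~\ref{Soc_twist_lemma} applied to them shows that the last twist in the minimal free resolution of $R/(J^n)^{[q]}=R/(J^{[q]})^n$ is the single value $(n-1+d)qe$; dualizing over the Gorenstein ring $R$ (with $\omega_R=R(a)$) then shows its socle is concentrated in degree $(n-1+d)qe+a$, which is therefore its top nonzero degree. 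The inclusion $J^n+\m^{(n-1)e+de}\subseteq(J^n)^*$ now follows with no further hypotheses: since $R$ is standard graded, $\m^{(n-1)e+de}=R_{\ge(n-1+d)e}$, so for homogeneous $z$ with $\deg z\ge(n-1+d)e$ and any nonzero homogeneous $c$ with $\deg c>a$,
\[
\deg(cz^q)=\deg c+q\deg z>a+(n-1+d)qe,
\]
which exceeds the top degree of $R/(J^n)^{[q]}$, so $cz^q\in(J^n)^{[q]}$ for all $q$ and hence $z\in(J^n)^*$.

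The reverse inclusion $(J^n)^*\subseteq J^n+\m^{(n-1)e+de}$ is the heart of the matter. By the same resolution-plus-duality computation at $q=1$, the socle of $R/J^n$ is concentrated in the single degree $T_n=(n-1+d)e+a$. I reduce everything to the claim $\m^{a+1}\cdot(J^n)^*\subseteq J^n$: granting it, suppose $z\in(J^n)^*$ is homogeneous of degree $\delta<(n-1+d)e$ with $z\notin J^n$. Then $\overline z\ne0$ in $R/J^n$, and since the socle sits in the single top degree $T_n$, repeated homogeneous multiplication produces $w$ with $\deg w=T_n-\delta\ge a+1$ and $\overline{wz}\ne0$; but $w\in\m^{a+1}$ forces $wz\in J^n$ by the claim, a contradiction. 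Thus every homogeneous element of $(J^n)^*$ of degree below $(n-1+d)e$ lies in $J^n$, which is the asserted inclusion. The claim $\m^{a+1}(J^n)^*\subseteq J^n$ holds as soon as $\m^{a+1}$ lies in the test ideal $\tau(R)=\operatorname{Ann}_R\!\big(0^*_{H^{d}_{\m}(R)}\big)$, and because $[H^{d}_{\m}(R)]_{j}=0$ for $j>a$, multiplication by $\m^{a+1}$ annihilates every non-negatively graded class; so it suffices to prove $0^*_{H^{d}_{\m}(R)}\subseteq [H^{d}_{\m}(R)]_{\ge 0}$, i.e. that the tight closure of zero in the top local cohomology carries no negatively graded component. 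This vanishing is exactly where $p>(d-1)r-d$ enters, and I expect it to be the main obstacle: it should come from the explicit graded Frobenius action on $H^{d}_{\m}(R)$ of a diagonal hypersurface (the Han--Monsky type computation behind \cite{dmv,GMV2019}), the inequality guaranteeing that Frobenius is injective on the negative-degree pieces so that no negatively graded class can be annihilated.

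Finally I would deduce (b)--(d). For (b), after localizing at the irrelevant maximal ideal $R_{\m}$ is an excellent \CM{} local domain and $J$ is generated by a system of parameters, so Theorem~\ref{thm: main} gives $\depth G^*(J)\ge d$, whence $G^*(J)$ is \CM{}; alternatively the Valabrega--Valla equalities $J\cap(J^{n+1})^*=J(J^n)^*$ follow directly from (a). For (c), the top nonzero degree of $R/J$ is $a+de$, so a degree-by-degree count gives $J\m^{ke}=\m^{(k+1)e}$ precisely when $(k+1)e>a+de$, i.e. $k>\tfrac{r-1-d}{e}+(d-1)$, yielding $r_J(\m^e)=\big[\tfrac{r-1-d}{e}\big]+d$; feeding (a) into the identity $(J^{n+1})^*=J(J^n)^*$ shows it holds once $(\m^e)^{n+d}=J(\m^e)^{n-1+d}$, that is for $n-1+d\ge r_J(\m^e)$, i.e. $n\ge\big[\tfrac{r-1-d}{e}\big]+1$, so $r^*(J)\le\big[\tfrac{r-1-d}{e}\big]+1$. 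For (d), the hypothesis $\big[\tfrac{r-1-d}{e}\big]\le d-2$ gives $r^*(J)\le d-1$, so combining the Cohen--Macaulayness of $G^*(J)$ from (b) with the standard criterion that an $\m$-primary ideal whose tight-closure associated graded ring is \CM{} and whose tight reduction number is at most $d-1$ has \CM{} Rees algebra, I conclude that $\mathcal R^*(J)$ is Cohen--Macaulay.
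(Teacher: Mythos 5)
Your overall strategy is sound, and parts (b)--(d) track the paper's own reasoning closely: the paper likewise deduces (b) from (a) via the Valabrega--Valla equalities using $J\cap\m^{n}=J\m^{n-e}$ (coming from $G(\m)\cong R$ being Cohen--Macaulay), obtains $r_J(\m^e)$ by the same degree count (citing \cite{marthesis} rather than recomputing it), gets $r^*(J)\le\left[\frac{r-1-d}{e}\right]+1$ by the identical substitution of (a) into $J(J^n)^*$, and invokes Viet's criterion \cite[Theorem 2.3]{viet} for (d). The genuine difference is in (a). The paper computes the back twists of a free resolution of $R/J^n$ via Lemma \ref{Soc_twist_lemma} and then quotes \cite[Theorem 5.11]{CHKES97} as a black box to conclude $(J^n)^*=J^n+\m^{(n-1)e+de}$. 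You instead unwind that theorem: the inclusion $J^n+\m^{(n-1)e+de}\subseteq (J^n)^*$ by the $a$-invariant bound on the top degree of $R/(J^{[q]})^n$ (correct, and indeed it needs nothing beyond $R$ being a graded domain), and the reverse inclusion by reducing, via the concentration of $\Soc(R/J^n)$ in the single degree $(n-1+d)e+a$ and the Gorenstein identification $\tau(R)=\operatorname{Ann}\bigl(0^*_{H^{d}_{\m}(R)}\bigr)$, to the single statement $0^*_{H^{d}_{\m}(R)}\subseteq[H^{d}_{\m}(R)]_{\ge 0}$. This is more self-contained than the paper's argument and makes visible exactly where each hypothesis enters.

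The one real gap is that you do not prove the vanishing $0^*_{H^{d}_{\m}(R)}\subseteq[H^{d}_{\m}(R)]_{\ge 0}$; you only say you ``expect'' it to follow from a Frobenius computation on $H^d_\m(R)$. That statement carries the entire weight of the hypothesis $p>(d-1)r-d$, and it does not follow from anything else in your argument: it is the Strong Vanishing Theorem for diagonal hypersurfaces, which is exactly \cite[Theorem 6.1]{Huneke1998} --- the first citation in the paper's proof of (a). With that reference supplied your argument for the reverse inclusion closes; without it, (a) is unproven. Two smaller points you should make explicit: (i) the socle-climbing step needs $(J^n)^*$ to be homogeneous, so that it suffices to test homogeneous $z$; and (ii) the passage from $\m^{a+1}\subseteq\operatorname{Ann}\bigl(0^*_{H^{d}_{\m}(R)}\bigr)$ to $\m^{a+1}(J^n)^*\subseteq J^n$ uses that annihilators of $0^*$ in the injective hull are test elements, i.e.\ $\operatorname{Ann}(0^*_{E})\subseteq\bigcap_I(I:I^*)$ in the graded excellent (not just complete local) setting; this is standard for Gorenstein rings but deserves a citation rather than being absorbed into the phrase ``test ideal.''
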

\begin{proof}
(a) We begin by observing that $R$ satisfies the Strong Vanishing Conjecture by \cite[Theorem 6.1]{Huneke1998}. Put $S=\mathbb{F}_p[f_1,\ldots,f_d]$ and $\mathfrak{n}=(f_1,\ldots,f_d)S.$ Then $\Soc(S/\mathfrak{n}^n)=\frac{\mathfrak{n}^n:\mathfrak{n}}{\mathfrak{n}^n}=\frac{\mathfrak{n}^{n-1}}{\mathfrak{n}^n}.$ Since the Socle of $S/\mathfrak{n}^n$ is generated by all the monomials of degree $n-1$ in $f_1,\ldots,f_d,$ it implies that degree of any minimal generator of the socle is  $(n-1)e.$ Using Lemma \ref{Soc_twist_lemma} for the pair $S/\mathfrak{n}^n$ and $R/J^n$ we get a one-to-one correspondence between the set degrees of minimal generators of $\Soc(S/\mathfrak{n}^n)$ and the set of twists $b_{d_i}$ for $i=1,\ldots,r,$ where $b_{d_i}$ refers to the last twist occuring in the projective resolution of $R/J^n.$ Since $\deg(\Soc(S/\mathfrak{n}^n))=(n-1)e,$ by Lemma \ref{Soc_twist_lemma} it implies that $b_{d_1}=\cdots=b_{d_r}$ and $b_{d_i}-de=(n-1)e$ for $1\leq i \leq r.$ Therefore using \cite[Theorem 5.11]{CHKES97}, it follows that $(J^n)^*=J^n+\m^{(n-1)e+de}.$\\
\noindent (b) Since, $G(\fm)\cong R,$ $G(\fm)$ is Cohen Macaulay, using Valabrega-Valla equation, we get 
\begin{align*}
J\cap \fm^{n}=J\fm^{n-e} \text{ for all }n \ge 1.
\end{align*}
Consequently, 
\begin{align*}
J\cap (J^n)^*= J^n+J\cap \fm^{(n-1)e+de}& = J^n+J\fm^{e(n-1)+(d-1)e}\\
& =J\left( J^{n-1}+\fm^{e(n-1)+(d-1)e} \right) = J(J^{n-1})^*.
\end{align*}
Hence $G^*(J)$ is Cohen Macaulay.\\
\noindent (c) Using \cite[Corollary 2.21]{marthesis}, we get $r_J(\fm^e)=\left[\frac{r-1-d}{e} \right]+d.$ Hence $J(\fm^e)^n=(\fm^e)^{n+1}$ for all $n\ge \left[ \frac{r-1-d}{e} \right]+d.$ For $n\ge \left[ \frac{r-1-d}{e}\right]+1,$
\begin{align*}
J(J^n)^* & = J\left(J^n+\fm^{(n-1)e+de}\right)\\
  & = J^{n+1} +J\fm^{(n-1)e+de} \\
  & = J^{n+1}+\fm^{e[n+d]} = (J^{n+1})^*.
\end{align*}
Therefore, $r^*(J)\le \left[ \frac{r-1-d}{e} \right]+1.$\\
$(d)$ Since $R$ and $G^*(I)$ are Cohen-Macaulay, by a result of Viet
\cite[Theorem 2.3]{viet}  $\mathcal R^*(J)$ is Cohen-Macaulay  if and only if $r^*(J)\leq d-1.$
Thus if $ \left[ \frac{r-1-d}{e} \right]+1\leq d-1$ then $r^*(J)\leq d-1.$ Therefore $\mathcal R^*(J)$ is Cohen-Macaulay.
\end{proof}

\end{document}